%2multibyte Version: 5.50.0.2953 CodePage: 1254
\documentclass{article}%
\usepackage{graphicx}
\usepackage{amsmath}
\usepackage{amsfonts}
\usepackage{amssymb}
\usepackage{setspace}
\usepackage[hmargin=3.5cm,vmargin=3.5cm]{geometry}%
\setcounter{MaxMatrixCols}{30}
%TCIDATA{OutputFilter=latex2.dll}
%TCIDATA{Version=5.50.0.2953}
%TCIDATA{Codepage=1254}
%TCIDATA{CSTFile=LaTeX article (bright).cst}
%TCIDATA{Created=Tue Apr 06 14:54:02 2004}
%TCIDATA{LastRevised=Friday, January 17, 2025 17:09:02}
%TCIDATA{<META NAME="GraphicsSave" CONTENT="32">}
%TCIDATA{<META NAME="SaveForMode" CONTENT="1">}
%TCIDATA{BibliographyScheme=Manual}
%TCIDATA{<META NAME="DocumentShell" CONTENT="General\Blank Document">}
%TCIDATA{Language=American English}
%TCIDATA{PageSetup=72,57,72,72,0}
%BeginMSIPreambleData
\providecommand{\U}[1]{\protect\rule{.1in}{.1in}}
%EndMSIPreambleData
\providecommand{\U}[1]{\protect\rule{.1in}{.1in}}
\providecommand{\U}[1]{\protect\rule{.1in}{.1in}}
\providecommand{\U}[1]{\protect\rule{.1in}{.1in}}
\providecommand{\U}[1]{\protect\rule{.1in}{.1in}}
\providecommand{\U}[1]{\protect\rule{.1in}{.1in}}
\providecommand{\U}[1]{\protect\rule{.1in}{.1in}}
\providecommand{\U}[1]{\protect\rule{.1in}{.1in}}
\providecommand{\U}[1]{\protect\rule{.1in}{.1in}}
\providecommand{\U}[1]{\protect\rule{.1in}{.1in}}
\providecommand{\U}[1]{\protect\rule{.1in}{.1in}}
\providecommand{\U}[1]{\protect\rule{.1in}{.1in}}
\providecommand{\U}[1]{\protect\rule{.1in}{.1in}}
\providecommand{\U}[1]{\protect\rule{.1in}{.1in}}
\providecommand{\U}[1]{\protect\rule{.1in}{.1in}}
\providecommand{\U}[1]{\protect\rule{.1in}{.1in}}
\providecommand{\U}[1]{\protect\rule{.1in}{.1in}}
\providecommand{\U}[1]{\protect\rule{.1in}{.1in}}
\providecommand{\U}[1]{\protect\rule{.1in}{.1in}}
\providecommand{\U}[1]{\protect\rule{.1in}{.1in}}
\providecommand{\U}[1]{\protect\rule{.1in}{.1in}}
\providecommand{\U}[1]{\protect\rule{.1in}{.1in}}
\providecommand{\U}[1]{\protect\rule{.1in}{.1in}}
\providecommand{\U}[1]{\protect\rule{.1in}{.1in}}
\providecommand{\U}[1]{\protect\rule{.1in}{.1in}}
\providecommand{\U}[1]{\protect\rule{.1in}{.1in}}
\providecommand{\U}[1]{\protect\rule{.1in}{.1in}}
\providecommand{\U}[1]{\protect\rule{.1in}{.1in}}
\providecommand{\U}[1]{\protect\rule{.1in}{.1in}}
\providecommand{\U}[1]{\protect\rule{.1in}{.1in}}
\providecommand{\U}[1]{\protect\rule{.1in}{.1in}}
\providecommand{\U}[1]{\protect\rule{.1in}{.1in}}
\providecommand{\U}[1]{\protect\rule{.1in}{.1in}}
\providecommand{\U}[1]{\protect\rule{.1in}{.1in}}
\providecommand{\U}[1]{\protect\rule{.1in}{.1in}}
\providecommand{\U}[1]{\protect\rule{.1in}{.1in}}
\providecommand{\U}[1]{\protect\rule{.1in}{.1in}}
\providecommand{\U}[1]{\protect\rule{.1in}{.1in}}

\setcounter{page}{1}
\setlength{\textheight}{23.6cm}
\setlength{\textwidth}{16cm}
\setlength{\oddsidemargin}{1cm}
\setlength{\evensidemargin}{1cm}
\pagestyle{myheadings}
\thispagestyle{empty}
\newtheorem{theorem}{Theorem}
{}

\newtheorem{example}{Example}

\newtheorem{lemma}{Lemma}
{}

\newtheorem{remark}{Remark}

\newenvironment{proof}[1][Proof]{\textbf{#1.} }{\ \rule{0.5em}{0.5em}}

\oddsidemargin 1.0cm \evensidemargin 1.0cm
\voffset -1cm
\topmargin 0.1cm
\headheight 0.5cm
\headsep 0.5cm
\begin{document}

\title{On Explicit Estimations for the Bloch Eigenvalues of the One-dimensional
Schr\"{o}dinger Operator and the Kronig-Penney Model}
\author{Cemile Nur* and Oktay Veliev**\\Department of Basic Sciences, Yalova University\\Yalova, Turkey{\small .}\ {\small e-mail: cnur@yalova.edu.tr*}\\Department of Mechanical Engineering, Dogus University\\Istanbul, Turkey{\small .}\ {\small e-mail: oveliev@dogus.edu.tr**}}
\date{}
\maketitle

\begin{abstract}
In this paper, we consider the small and large eigenvalues of the
one-dimensional Schr\"{o}dinger operator $L(q)$ with a periodic, real and
locally integrable potential $q$. First we explicitly write out the first and
second terms of the asymptotic formulas for the large periodic and
antiperiodic eigenvalues and illustrate these formulas for the Kronig-Penney
model. Then we give estimates for the small periodic and antiperiodic
eigenvalues and for the length of the first gaps in the case of the
Kronig-Penney model. Moreover, we give error estimations and present a
numerical example.

Key Words: One-dimensional Schr\"{o}dinger operator, Periodic potential,
Kronig-Penney model.

AMS Mathematics Subject Classification: 34L20, 47E05.

\end{abstract}

\section{Introduction and Preliminary Facts}

In the present paper, we investigate the spectrum of the operator $L(q)$
generated in $L_{2}(-\infty,\infty)$ by the differential expression
\begin{equation}
l(y)=-y^{\prime\prime}+qy, \label{eq1}%
\end{equation}
where $q$ is $1$-periodic integrable on $[0,1]$ and a real-valued potential.
Without loss of generality, it is assumed that
\begin{equation}
\int_{0}^{1}q(x)dx=0. \label{eq2}%
\end{equation}

It is well known that (see for example~\cite{1, 3, 4, 5, 8, 10, 12}) the
spectrum $\sigma(L(q))$ of $L(q)$ is the union of the spectra $\sigma
(L_{t}(q))$ of the operators $L_{t}(q)$ for $t\in(-\pi,\pi]$ generated in
$L_{2}[0,1]$ by~\eqref{eq1} and the boundary conditions
\begin{equation}
y(1)=e^{it}y(0),\qquad y^{\prime}(1)=e^{it}y^{\prime}(0). \label{eq3}%
\end{equation}
Moreover, $\sigma(L(q))$ consists of the closed intervals whose end points are
the eigenvalues of $L_{t}(q)$ for $t=0,\pi$ (see~\eqref{eq3}). Therefore, to
study the spectrum of the self adjoint operator $L(q)$ it is enough to
investigate the eigenvalues of $L_{0}(q)$ and $L_{\pi}(q),$ which are called
the periodic and antiperiodic eigenvalues, respectively. The first periodic
eigenvalue is denoted by $\lambda_{0}$. The other eigenvalues of $L_{0}(q)$
and the eigenvalues of $L_{\pi}(q)$ are denoted by $\lambda_{n,j}$ and
$\mu_{n,j}$, respectively, for $n\in\mathbb{N}$, $j=1,2$, where $\mathbb{N}$
is the set of positive integers. Without loss of generality, it is assumed
that $\lambda_{n,1}\leq\lambda_{n,2}$ and $\mu_{n,1}\leq\mu_{n,2}$, for
$n\in\mathbb{N}$. It is known that (see~\cite{4}), the spectrum of the
Schr\"{o}dinger operator $L(q)$ consists of the real intervals
\[
\Gamma_{1}:=[\lambda_{0},\mu_{1,1}],\quad\Gamma_{2}:=[\mu_{1,2},\lambda
_{1,1}],\quad\Gamma_{3}:=[\lambda_{1,2},\mu_{2,1}],\quad\Gamma_{4}:=[\mu
_{2,2},\lambda_{2,1}],\ldots.
\]
The bands $\Gamma_{1}$, $\Gamma_{2}$, $\ldots$ of the spectrum $\sigma(L(q))$
of $L(q)$ are separated by the gaps
\[
\Delta_{1}:=(\mu_{1,1},\mu_{1,2}),\quad\Delta_{2}:=(\lambda_{1,1}%
,\lambda_{1,2}),\quad\Delta_{3}:=(\mu_{2,1},\mu_{2,2}),\quad\Delta
_{4}:=(\lambda_{2,1},\lambda_{2,2}),\ldots.
\]
For this reason, the investigation of the periodic and antiperiodic
eigenvalues is of great importance. In \cite{2} and~\cite{17}, Veliev
investigated the asymptotic behavior of large periodic and antiperiodic
eigenvalues and obtained asymptotic formulas for the lengths of the gaps in
the spectrum.

In this paper, we obtain more explicit formulas for the large eigenvalues of
$L_{0}(q)$ and $L_{\pi}(q)$ and estimate the small eigenvalues of these
operators. Moreover, we take the Kronig-Penney model as an example and
illustrate asymptotic formulas and estimations with this example. For this we
use some formulas from \cite{2, 13, 15, 16, 17}. For the independent reading
of this paper, first of all, we list the formulas of these papers that are
essentially used here.

In~\cite{2}, we established the following equalities
\begin{align}
&  (\lambda_{n,j}-(2\pi n)^{2}-A_{m}(\lambda_{n,j}))(\Psi_{n,j}(x),e^{i2\pi
nx})\nonumber\\
&  \qquad-(q_{2n}+B_{m}(\lambda_{n,j}))(\Psi_{n,j}(x),e^{-i2\pi nx}%
)=R_{m}(\lambda_{n,j}), \label{eq4}%
\end{align}
where
\[
A_{m}(\lambda_{n,j})=\sum_{k=1}^{m}a_{k}(\lambda_{n,j}),\quad B_{m}%
(\lambda_{n,j})=\sum_{k=1}^{m}b_{k}(\lambda_{n,j}),
\]%
\begin{equation}
a_{k}(\lambda_{n,j})=\sum_{n_{1},n_{2},...,n_{k}}\frac{q_{n_{1}}q_{n_{2}%
}...q_{n_{k}}q_{-n_{1}-n_{2}-...-n_{k}}}{\prod\limits_{s=1}^{k}(\lambda
_{n,j}-(2\pi(n-n_{1}-n_{2}-...-n_{s}))^{2})}, \label{eq5}%
\end{equation}%
\begin{equation}
b_{k}(\lambda_{n,j})=\sum_{n_{1},n_{2},...,n_{k}}\frac{q_{n_{1}}q_{n_{2}%
}...q_{n_{k}}q_{2n-n_{1}-n_{2}-...-n_{k}}}{\prod\limits_{s=1}^{k}%
(\lambda_{n,j}-(2\pi(n-n_{1}-n_{2}-...-n_{s}))^{2})}, \label{eq6}%
\end{equation}%
\begin{equation}
R_{m}(\lambda_{n,j})=\sum_{n_{1},n_{2},...,n_{m+1}}\frac{q_{n_{1}}q_{n_{2}%
}...q_{n_{m}}q_{n_{m+1}}(q(x)\Psi_{n,j}(x),e^{i2\pi(n-n_{1}-...-n_{m+1})x}%
)}{\prod\limits_{s=1}^{m+1}(\lambda_{n,j}-(2\pi(n-n_{1}-n_{2}-...-n_{s}%
))^{2})}. \label{eq7}%
\end{equation}
The summations in these formulas are taken over the indices satisfying the
conditions%
\[
n_{s}\neq0,\quad n_{1}+n_{2}+...+n_{s}\neq0,2n,
\]
for $s=1,2,...,m+1$. Moreover, it was proved that
\[
a_{k}(\lambda_{n,j})=O((\frac{\ln\left\vert n\right\vert }{n})^{k}),\qquad
b_{k}(\lambda_{n,j})=O((\frac{\ln\left\vert n\right\vert }{n})^{k})
\]
for $k\geq1$ and
\[
R_{m}(\lambda_{n,j})=O((\frac{\ln\left\vert n\right\vert }{n})^{m+1}).
\]

In~\cite{17}, the terms $a_{1}(\lambda_{n,j})$ and $a_{2}(\lambda_{n,j})$ were
investigated in detail and the following estimations were proved:
\begin{equation}
a_{1}(\lambda_{n,j})=a_{1}((2\pi n)^{2})+o(n^{-2})=o(n^{-1}) \label{eq8}%
\end{equation}
and
\[
a_{2}(\lambda_{n,j})=a_{2}((2\pi n)^{2})+o(n^{-3}\ln n),
\]
where $a_{1}((2\pi n)^{2})$ and $a_{2}((2\pi n)^{2})$ are obtained from
$a_{1}(\lambda_{n,j})$ and $a_{2}(\lambda_{n,j})$ by changing $\lambda_{n,j}$
to $(2\pi n)^{2}$ in the corresponding formulas and $q\in L_{1}[0,1]$.

Similarly, in~\cite{13}, we proved that the following relations are valid:
\begin{align}
&  b_{1}(\lambda_{n,j})=b_{1,n}((2\pi n)^{2})+o\left(  n^{-2}\right)  ,\qquad
b_{2}(\lambda_{n,j})=o\left(  n^{-2}\right)  ,\nonumber\\
&  \qquad b_{1,n}((2\pi n)^{2})=2Q_{0}Q_{2n}-S_{2n}, \label{eq9}%
\end{align}
where $b_{1}((2\pi n)^{2})$ is obtained from $b_{1}(\lambda_{n,j})$ by
changing $\lambda_{n,j}$ to $(2\pi n)^{2}$ in~\eqref{eq6}, $Q_{k}=(Q,e^{2\pi
ikx})$\ and $S_{k}=(S,e^{2\pi ikx})$ are the Fourier coefficients of the
following functions:%
\begin{equation}
Q(x)=\int_{0}^{x}q(t)\,dt,\qquad S(x)=Q^{2}(x) \label{eq10}%
\end{equation}
(see Lemma 6 of~\cite{13}).

Using~\eqref{eq8} and these relations, in~\cite{17}, the following asymptotic
formulas were obtained. If there exists a positive number $\varepsilon$ such
that
\begin{equation}
\left\vert q_{k}\right\vert \geq\frac{\varepsilon}{k}, \label{eq11}%
\end{equation}
and
\begin{equation}
\left\vert q_{k}-S_{k}+2Q_{0}Q_{k}\right\vert \geq\varepsilon k^{-2},
\label{eq12}%
\end{equation}
for $k=2n>N,$ where $N$ is a sufficiently large positive integer, then the
periodic eigenvalues\ $\lambda_{n,j}$, for $n>N$ and $j=1,2$, are simple and
satisfy the asymptotic formulas
\begin{equation}
\lambda_{n,j}=(2\pi n)^{2}+(-1)^{j}\left\vert q_{2n}\right\vert +o(n^{-1})
\label{eq13}%
\end{equation}
and
\begin{equation}
\lambda_{n,j}=(2\pi n)^{2}+a_{1}((2\pi n)^{2})+a_{2}((2\pi n)^{2}%
)+(-1)^{j}\left\vert q_{2n}-S_{2n}+2Q_{0}Q_{2n}\right\vert +o(n^{-2}),
\label{eq14}%
\end{equation}
respectively (see pages 66 and 70 in \cite{17}). Similarly, the antiperiodic
eigenvalues\ $\mu_{n,j}$ satisfy the formulas obtained from~\eqref{eq13}
and~\eqref{eq14} by replacing $2n$ with $2n-1$ if~\eqref{eq11}
and~\eqref{eq12} are satisfied for $k=2n-1>N$. Moreover, from these formulas
we obtained the following formulas for the lengths $\left\vert \Delta
_{k}\right\vert $ of the gaps $\Delta_{k}$:%
\begin{equation}
\left\vert \Delta_{k}\right\vert =2\left\vert q_{k}\right\vert +o(k^{-1})
\label{eq15}%
\end{equation}
and
\begin{equation}
\left\vert \Delta_{k}\right\vert =2\left\vert q_{k}-S_{k}+2Q_{0}%
Q_{k}\right\vert +o(k^{-2}) \label{eq16}%
\end{equation}
without using the conditions~\eqref{eq11} and~\eqref{eq12} (see page 71 in
\cite{17}).

The superiority of formula~\eqref{eq15} over the well-known Titchmarsh
formula
\begin{equation}
\left\vert \Delta_{2n}\right\vert =2\left\vert q_{2n}\right\vert +O(n^{-1})
\label{eq17}%
\end{equation}
(see~\cite[Chapter 21]{14},~\cite[Chapter 3]{1}, and their references) for any
potential $q\in L_{1}[0,1]$ can be explained as follows. Generally speaking,
certain Fourier coefficients $q_{2n}$ of many functions in $L_{1}[0,1]$ can be
of order $n^{-1}$ in the sense that $q_{2n}=O(n^{-1})$ and $n^{-1}=O(q_{2n})$
when $n$ belongs to some subset of $\mathbb{N}$ (e.g., when the potential $q$
exhibits a jump discontinuity, as occurs in the well known Kronig-Penney
model). In this context, equation~\eqref{eq15} with the error term $o(n^{-1})$
explicitly isolates the term $2\left\vert q_{2n}\right\vert $ from the error
$o(n^{-1})$, whereas equation~\eqref{eq17} does not. In this sense,
$2\left\vert q_{2n}\right\vert $ can be seen as the first term of the
asymptotic formula for the gaps in $\sigma(L(q))$ with the potential $q\in
L_{1}[0,1].$ Similarly,~\eqref{eq16} gives us the second term $2\left\vert
q_{2n}-S_{2n}+2Q_{0}Q_{2n}\right\vert -2\left\vert q_{2n}\right\vert $ of the
asymptotic formula for the gaps in $\sigma(L(q))$.

Note that, in~\cite{17}, we obtained asymptotic formulas with the error term
$O\left(  (\frac{\ln n}{n})^{k}\right)  $ for the $n$th\ periodic and
antiperiodic eigenvalues and the $n$th gaps in the spectrum of the operator
$L(q)$ with $q\in L_{1}[0,1]$, where $k=3,4,...$. Many studies have been
devoted to such asymptotic formulas for differentiable potentials
(see~\cite[Chap. 3]{1},~\cite[Chap. 1]{9} and~\cite[Chap. 2]{11} and their
references), whereas we obtain asymptotic formulas of arbitrary order for the
potential $q$ from $L_{1}[0,1]$. In Section 2, we improve the asymptotic
formulas obtained in~\cite{17} and give explicit asymptotic formulas for the
large eigenvalues.

In Section 3, we consider the small periodic and antiperiodic eigenvalues of
the Schr\"{o}dinger operator $L(q)$ and estimate the length of the gaps
$\Delta_{n}$ for the small values of $n$ in the Kronig-Penney model. We note
that, asymptotic formulas obtained in Section 2 cannot be used for estimating
the small eigenvalues. To give estimates for the small eigenvalues, we use
completely different methods than those of Section 2. In Section 3, we find
conditions on the Kronig-Penney potential for which the iteration
formula~\eqref{eq4} is also valid for the small eigenvalues.

Finally, note that the Kronig-Penney model is a simplified approach to
understanding the band structure of materials, it provides a foundational
understanding of band theory and is a stepping stone to more complex models in
solid-state physics. This model is particularly important in explaining the
quantum mechanical basis for electrical conduction in solids and the
properties of semiconductors.

\section{Asymptotic Formulas for the Large Eigenvalues}

To obtain explicit formulas for the large eigenvalues from~\eqref{eq14}, we
use the following lemmas.

\begin{lemma}
\label{l1} If $q\in L_{1}[0,1]$, then the following formula holds:
\begin{equation}
a_{1}((2\pi n)^{2})=\frac{\left\vert q_{2n}\right\vert ^{2}}{-16\pi^{2}n^{2}%
}+D(2n), \label{eq18}%
\end{equation}
where%
\[
D(k)=:\frac{i}{2\pi k}\int_{0}^{1}q(x)(Q(x,k)-Q_{k,0}))e^{-i2\pi kx}\,dx,
\]%
\[
Q(x,k)=\int_{0}^{x}q(t)e^{i2\pi kt}\,dt-q_{-k}x,\text{ }Q_{k,0}=\int_{0}%
^{1}Q(x,k)dx.
\]

\end{lemma}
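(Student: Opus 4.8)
The plan is to start from the definition~\eqref{eq5} of $a_1$ with $\lambda_{n,j}$ replaced by $(2\pi n)^2$, namely
\begin{equation*}
a_1((2\pi n)^2)=\sum_{n_1\neq 0,\,2n}\frac{q_{n_1}q_{-n_1}}{(2\pi n)^2-(2\pi(n-n_1))^2},
\end{equation*}
and to split off the single resonant-looking term at $n_1=2n$ (which is excluded) versus reorganizing the rest. Since $q_{-n_1}=\overline{q_{n_1}}$ for real $q$, the numerator is $|q_{n_1}|^2$, and the denominator factors as $(2\pi)^2 n_1(2n-n_1)=-4\pi^2 n_1(n_1-2n)$. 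The term that we want to isolate as $\frac{|q_{2n}|^2}{-16\pi^2 n^2}$ is not literally in the sum (the index $n_1=2n$ is forbidden), so the first task is to see how it arises. I expect the right move is to write $\frac{1}{n_1(2n-n_1)}=\frac{1}{2n}\bigl(\frac{1}{n_1}+\frac{1}{2n-n_1}\bigr)$ by partial fractions; then $a_1((2\pi n)^2)=\frac{-1}{8\pi^2 n}\sum_{n_1\neq 0,2n}|q_{n_1}|^2\bigl(\frac{1}{n_1}+\frac{1}{2n-n_1}\bigr)$, and after the substitution $n_1\mapsto 2n-n_1$ in the second piece the two sums combine into $\frac{-1}{4\pi^2 n}\sum_{n_1\neq 0,2n}\frac{|q_{n_1}|^2}{n_1}$. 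This last sum is, up to the missing term at $n_1=2n$, essentially $\sum_{n_1\neq 0}\frac{|q_{n_1}|^2}{n_1}=0$ by symmetry $n_1\leftrightarrow -n_1$ when $q$ is real — so the whole contribution collapses to the missing term $\frac{|q_{2n}|^2}{2n}$ with the appropriate sign, giving exactly $\frac{|q_{2n}|^2}{-16\pi^2 n^2}$. Wait — this heuristic is only formal because $\sum|q_{n_1}|^2/n_1$ need not converge for $q\in L_1$; so the real content of the lemma is that the error committed in this formal manipulation is precisely $D(2n)$, written as an integral that does converge.

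Hence the rigorous route I would take is to recognize the sum as a Fourier-series/Parseval computation rather than manipulate it term by term. Introduce $Q(x)=\int_0^x q$ as in~\eqref{eq10}; its Fourier coefficients satisfy $Q_k=\frac{q_k}{2\pi i k}$ for $k\neq 0$ (from $Q'=q$ together with the normalization~\eqref{eq2}, modulo the constant $Q_0$). Then $\frac{|q_{n_1}|^2}{-4\pi^2 n_1(n_1-2n)}$ can be rewritten in terms of products $q_{n_1}\overline{Q_{n_1-2n}}$ or $Q_{n_1}q_{\dots}$, turning $a_1((2\pi n)^2)$ into the Fourier coefficient of a product of two $L_2$ functions, which by Parseval equals an integral of the form $c\int_0^1 q(x)\,\overline{(\text{something involving }Q)}\,dx$. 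The function $Q(x,k)=\int_0^x q(t)e^{i2\pi kt}dt - q_{-k}x$ appearing in the statement is exactly the "twisted primitive" whose appearance signals that one is resumming $\sum_{n_1}\frac{q_{n_1}}{n_1-2n}e^{i2\pi(\cdot)}$ after shifting the frequency by $2n$; the subtraction of $q_{-k}x$ removes the $n_1=k$ resonance and the $-Q_{k,0}$ recenters it to have mean zero, which is what makes the $D(k)$ integral absolutely convergent for merely integrable $q$. So concretely: (i) substitute $Q_k=q_k/(2\pi ik)$ into the series; (ii) perform the frequency shift $n_1\mapsto n_1$, isolating the forbidden index; (iii) identify the resulting series as $\sum_k \widehat{f}(k)\widehat{g}(-k)$ for explicit $f,g\in L_2[0,1]$ and apply Parseval; (iv) simplify the resulting integral to the stated form of $D(2n)$ plus the explicit term $\frac{|q_{2n}|^2}{-16\pi^2 n^2}$.

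The main obstacle I anticipate is bookkeeping the exclusion constraints (the conditions $n_s\neq 0$, $n_1+\dots+n_s\neq 0,2n$) correctly through the partial-fraction split and the index shift: each forbidden index that gets "filled in" by a formal symmetry must be subtracted back, and it is precisely the accounting of those boundary terms that produces both the explicit main term $|q_{2n}|^2/(-16\pi^2 n^2)$ and the precise form of $Q(x,k)$ (the $-q_{-k}x$) and of $Q_{k,0}$ (the mean-zero normalization). A secondary technical point is justifying the interchange of summation and integration / the application of Parseval: one needs $q\in L_2$-type control, but since we are pairing $q\in L_1$ against a bounded-variation-like primitive $Q$, the pairing $\int q\cdot \overline{(Q(x,k)-Q_{k,0})}$ makes sense directly as a Lebesgue integral of an $L_1$ function against an $L_\infty$ function, so I would phrase the final identity as an equality of the series (which converges, being $O((\ln n/n)^{?})$ — actually $O(n^{-1})$ from~\eqref{eq8}) with that integral, proved by truncating, applying finite Parseval, and passing to the limit using dominated convergence on the integral side and absolute convergence on the series side. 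Once the pairing is set up this is routine; the creative step is guessing that $Q(x,k)$ with exactly those two correction terms is the right object, which the problem statement has helpfully handed us.
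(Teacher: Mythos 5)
You start from the same partial-fraction identity that the paper uses (its equation~\eqref{eq19}), but the next move is an algebraic error: after the substitution $n_1\mapsto 2n-n_1$ in the second piece, the numerator becomes $|q_{2n-n_1}|^2$, \emph{not} $|q_{n_1}|^2$, so the two pieces do not combine into $\frac{c}{n}\sum_{n_1\neq 0,2n}|q_{n_1}|^2/n_1$. The conclusion you draw from that false combination — that "the whole contribution collapses to the missing term," i.e.\ that $D(2n)$ is merely a convergence correction that would formally vanish — is therefore wrong, and it would remain wrong even for a smooth $q$ where the series converges absolutely. The issue is not conditional convergence; it is that the two halves of the partial-fraction split are genuinely different objects.

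The paper's actual route is shorter and avoids the bad substitution. The $n_1\leftrightarrow -n_1$ odd-symmetry cancellation is applied to the \emph{first} piece alone, $\frac{1}{4\pi n}\sum_{k\neq 0,2n}\frac{|q_k|^2}{2\pi k}$: all pairs cancel except the unmatched index $k=-2n$ (its partner $k=2n$ is excluded), and this single surviving term is exactly $\frac{|q_{2n}|^2}{-16\pi^2 n^2}$ — equation~\eqref{eq21}. The second piece, $\frac{1}{4\pi n}\sum_{k\neq 0,2n}\frac{|q_k|^2}{4\pi n-2\pi k}$, is left untouched and identified directly as $D(2n)$: one computes the Fourier coefficients $Q_{2n,2n-k}=\frac{q_{-k}}{i(4\pi n-2\pi k)}$ of $Q(x,2n)$, writes out the expansion~\eqref{eq22}, substitutes it into the integral defining $D(2n)$, and reads off the sum. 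This is the Parseval-type identification you gesture at in your second paragraph, so you do have the right idea in reserve — but you should apply the symmetry argument only to the first piece and leave the second piece intact rather than trying to fold it into the first. Your closing remarks about dominated convergence to justify the term-by-term integration are a reasonable way to make the Fourier step rigorous for $q\in L_1$.
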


\begin{proof}
Since $q_{-k}=\overline{q_{k}}$, $q_{0}=0$, from the definition of
$a_{1}((2\pi n)^{2})$ (see introduction), we obtain
\[
a_{1}((2\pi n)^{2})=\sum_{\substack{k=-\infty\\k\neq0,2n}}^{\infty}%
\frac{\left\vert q_{k}\right\vert ^{2}}{2\pi k(4\pi n-2\pi k)}.
\]
Using the equality
\begin{equation}
\frac{1}{2\pi k(4\pi n-2\pi k)}=\frac{1}{4\pi n}\left(  \frac{1}{2\pi k}%
+\frac{1}{4\pi n-2\pi k}\right)  , \label{eq19}%
\end{equation}
we see that
\begin{equation}
a_{1}((2\pi n)^{2})=\frac{1}{4\pi n}\sum_{\substack{k=-\infty\\k\neq
0,2n}}^{\infty}\frac{\left\vert q_{k}\right\vert ^{2}}{2\pi k}+\frac{1}{4\pi
n}\sum_{\substack{k=-\infty\\k\neq0,2n}}^{\infty}\frac{\left\vert
q_{k}\right\vert ^{2}}{(4\pi n-2\pi k)}. \label{eq20}%
\end{equation}
Since
\[
\frac{\left\vert q_{k}\right\vert ^{2}}{2\pi k}+\frac{\left\vert
q_{-k}\right\vert ^{2}}{2\pi(-k)}=0,
\]
we have the following equality for the first term on the right side
of~\eqref{eq20}:
\begin{equation}
\frac{1}{4\pi n}\sum_{\substack{k=-\infty\\k\neq0,2n}}^{\infty}\frac
{\left\vert q_{k}\right\vert ^{2}}{2\pi k}=\frac{\left\vert q_{2n}\right\vert
^{2}}{-16\pi^{2}n^{2}}. \label{eq21}%
\end{equation}

Now let us estimate the second term on the right side of~\eqref{eq20}. For
this, we consider the Fourier coefficients of $Q(x,2n)$. It is clear that
\[
Q(1,2n)=Q(0,2n)=0,\quad Q^{\prime}(x,2n)=q(x)e^{i4\pi nx}\,-q_{-2n}.
\]
Therefore, using the integration by parts, we see that the Fourier
coefficients $Q_{2n,2n-k}=(Q(x,2n),e^{i2\pi(2n-k)x})$ of $Q(x,2n)$ with
respect to the orthonormal basis $\left\{  e^{2\pi ikx}:k\in\mathbb{Z}%
\right\}  $, for $2n-k\neq0$, are%
\[
Q_{2n,2n-k}=\frac{q_{-k}}{i(4\pi n-2\pi k)}.
\]
Moreover, $q_{0}=0$ according to~\eqref{eq2}. Thus, the Fourier decomposition
of $Q(x,2n)$ has the form
\begin{equation}
Q(x,2n)=Q_{2n,0}+\sum_{\substack{k=-\infty\\k\neq0,2n}}^{\infty}\frac
{q_{-k}e^{i2\pi(2n-k)x}\,}{i(4\pi n-2\pi k)}. \label{eq22}%
\end{equation}
Thus,~\eqref{eq18} follows from~\eqref{eq20}-\eqref{eq22}. The lemma is proved.
\end{proof}

Now we consider $a_{2}((2\pi n)^{2})$. It is clear that
\begin{align*}
&  a_{2}((2\pi n)^{2})=\sum\limits_{k,k+l\neq0,2n}\frac{q_{k}q_{l}q_{-k-l}%
}{[(2\pi n)^{2}-(2\pi(n-k))^{2}][(2\pi n)^{2}-(2\pi(n-k-l))^{2}]}\\
&  \qquad=\sum_{k,l}\frac{q_{k}q_{l}q_{-k-l}}{2\pi k(4\pi n-2\pi k)(2\pi
k+2\pi l)(4\pi n-2\pi k-2\pi l)}.
\end{align*}
Using~\eqref{eq19} and
\[
\frac{1}{(k+l)(2n-k-l)}=\frac{1}{2n}\left(  \frac{1}{k+l}+\frac{1}%
{2n-k-l}\right)  ,
\]
we obtain that
\begin{equation}
a_{2}((2\pi n)^{2})=\frac{1}{16\pi^{2}n^{2}}(I_{1}+I_{2}+I_{3}+I_{4}),
\label{eq23}%
\end{equation}
where
\[
I_{1}(q)=\sum_{k,l}\frac{q_{k}q_{l}q_{-k-l}}{2\pi k(2\pi k+2\pi l)},
\]%
\[
I_{2}(n,q)=\sum_{k,l}\frac{q_{k}q_{l}q_{-k-l}}{(4\pi n-2\pi k)(2\pi k+2\pi
l)},
\]%
\[
I_{3}(n,q)=\sum_{k,l}\frac{q_{k}q_{l}q_{-k-l}}{2\pi k(4\pi n-2\pi k-2\pi l)},
\]
and%
\[
I_{4}(n,q)=\sum_{k,l}\frac{q_{k}q_{l}q_{-k-l}}{(4\pi n-2\pi k)(4\pi n-2\pi
k-2\pi l)}.
\]
Now we estimate $I_{1}(q)$ and $I_{j}(n,q)$, for $j=2,3,4$, in the following lemmas.

\begin{lemma}
\label{l2} The equality $I_{1}(q)=0$ holds.
\end{lemma}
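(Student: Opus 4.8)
The plan is to exploit the symmetry of the summand in $I_1(q)$ under permutations of the three "frequencies" that appear in it. Writing $m = -k-l$, the numerator $q_k q_l q_{-k-l} = q_k q_l q_m$ is symmetric in $k,l,m$ under the constraint $k+l+m=0$, while the denominator $2\pi k (2\pi k + 2\pi l) = 2\pi k \cdot (-2\pi m) = -4\pi^2 k m$ is symmetric only in $k$ and $m$. So first I would rewrite
\[
I_1(q) = \sum_{k+l+m=0}\frac{q_k q_l q_m}{-4\pi^2 k m},
\]
where the sum is over all triples of nonzero integers summing to zero (one checks that the excluded conditions $k\neq 0$, $k+l\neq 0$ from~\eqref{eq5} translate exactly into $k,l,m$ all nonzero, and the factor-of-$n$ denominators have been stripped off in passing to $I_1$). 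In this symmetric form the claim is that the sum vanishes.

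The key step is a symmetrization argument. Since the numerator is fully symmetric in $(k,l,m)$, I may replace the denominator $\frac{1}{-4\pi^2 k m}$ by the average of $\frac{1}{-4\pi^2 \cdot (\text{product of two of the three indices})}$ over the three choices of which pair to use, i.e. by
\[
\frac{1}{3}\cdot\frac{-1}{4\pi^2}\left(\frac{1}{km}+\frac{1}{kl}+\frac{1}{lm}\right)
= \frac{-1}{12\pi^2}\cdot\frac{l+m+k}{klm} = 0,
\]
using $k+l+m=0$ in the last step. Hence $I_1(q)=0$. To make this rigorous I would note that the series converges absolutely: by the Cauchy–Schwarz inequality and $q\in L_1[0,1]$ the Fourier coefficients are bounded, and $\sum_{k,l}\frac{1}{|k|\,|k+l|}$ — restricted to the relevant range — is summable after the partial-fraction manipulations already used in the derivation of~\eqref{eq23}; alternatively one invokes the $O((\ln|n|/n)^k)$ bounds quoted from~\cite{2}, which in particular guarantee that each $I_j$ in~\eqref{eq23} is a well-defined absolutely convergent sum. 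Absolute convergence is what licenses the rearrangement of the sum into the symmetric average.

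The main obstacle is purely bookkeeping: making sure that the index-exclusion conditions inherited from~\eqref{eq5} are genuinely the symmetric set $\{k,l,m\text{ nonzero}\}$ and that no boundary terms are lost when the three cyclic relabelings are performed. One subtlety is that the relabeling $(k,l,m)\mapsto(l,m,k)$ etc. must preserve the summation region; since that region is exactly "all nonzero integers with $k+l+m=0$," it is manifestly invariant, so this is not a real difficulty — but it is the point that needs to be stated carefully. An alternative, if one prefers to avoid the three-fold symmetrization, is the two-term trick used elsewhere in the paper: pair the term indexed by $(k,l)$ with the term indexed by $(m,l) = (-k-l,\,l)$, whose numerators agree and whose denominators are $-4\pi^2 km$ and $-4\pi^2 mk$... this only gives a factor $2$, so the clean route really is the full $S_3$-averaging above, and I would present that.
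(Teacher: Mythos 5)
Your symmetrization argument is correct and is, in essence, the paper's first proof restated more cleanly: the six terms the paper groups (with denominators $k(k+l)$, $l(k+l)$, $-kl$, $-kl$, $(k+l)k$, $(k+l)l$) are exactly the $S_3$-orbit of $(k,l,m)$ with $m=-k-l$, and collapsing them via $\tfrac{1}{k(k+l)}+\tfrac{1}{l(k+l)}=\tfrac{1}{kl}$ twice is the same algebra as your one-line identity $\tfrac{1}{km}+\tfrac{1}{kl}+\tfrac{1}{lm}=\tfrac{k+l+m}{klm}=0$. Your version has the advantage of making the underlying symmetry explicit rather than leaving it implicit in a list of six fractions, and your remark that the naive two-term pairing $(k,l)\leftrightarrow(-k-l,l)$ only reproduces the same denominator (hence a factor $2$, not cancellation) is accurate.

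The one place your proposal is on thinner ice than the paper is exactly where you yourself flag a worry: absolute convergence. For $q\in L_1[0,1]$ one only has bounded Fourier coefficients, and $\sum_{k,l}\frac{1}{|k|\,|k+l|}$ over the admissible indices is \emph{not} absolutely summable, so "Cauchy--Schwarz plus boundedness" does not by itself license the $S_3$-rearrangement; your appeal to the $O((\ln|n|/n)^k)$ bounds from \cite{2} concerns the size of $a_k(\lambda_{n,j})$, not absolute summability of the $(k,l)$-double sum. (The argument is fine when $q\in L_2$, which holds in the Kronig--Penney model, but the lemma is stated for $q\in L_1$.) The paper's \emph{second} proof avoids this issue entirely and is worth noting: it recognizes, via $Q(x)-Q_0=\sum_{k\neq 0}\frac{q_k}{-2\pi k i}e^{2\pi ikx}$, that
\[
I_1(q)=\int_0^1 q(x)\bigl(Q(x)-Q_0\bigr)^2\,dx
=\frac{1}{3}\Bigl[\bigl(Q(1)-Q_0\bigr)^3-\bigl(Q(0)-Q_0\bigr)^3\Bigr]=0,
\]
since $(Q-Q_0)'=q$ and $Q(1)=Q(0)=0$. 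This reduces the statement to the fundamental theorem of calculus for an $L_1$ density integrated against a bounded absolutely continuous function, with no series rearrangement at all, and so is the route that genuinely covers the $L_1$ case. If you keep the symmetrization proof, you should either restrict to $q\in L_2$ or justify the rearrangement by a limiting argument from trigonometric polynomials; if you want the lemma as stated for $q\in L_1$, the integral identity is the cleaner path.
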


\begin{proof}
We prove this lemma in two ways. First grouping the terms
\begin{align*}
&  \frac{q_{k}q_{l}q_{-k-l}}{4\pi^{2}k(k+l)},\quad\frac{q_{l}q_{k}q_{-k-l}%
}{4\pi^{2}l(k+l)},\quad\frac{q_{k}q_{-k-l}q_{l}}{4\pi^{2}k(-l)},\\
&  \frac{q_{l}q_{-k-l}q_{k}}{4\pi^{2}l(-k)},\quad\frac{q_{-k-l}q_{l}q_{k}%
}{4\pi^{2}(-k-l)(-k)},\quad\frac{q_{-k-l}q_{k}q_{l}}{4\pi^{2}(-k-l)(-l)}%
\end{align*}
with the equal multiplicands,%
\begin{align*}
&  q_{k}q_{l}q_{-k-l},\quad q_{l}q_{k}q_{-k-l},\quad q_{k}q_{-k-l}q_{l},\\
&  q_{l}q_{-k-l}q_{k},\quad q_{-k-l}q_{l}q_{k},\quad q_{-k-l}q_{k}q_{l}%
\end{align*}
and using the equality
\[
\frac{1}{k(k+l)}+\frac{1}{l(k+l)}=\frac{1}{kl}%
\]
we obtain the proof of the lemma.

The second way is the following. Using the integration by parts one can easily
verify that the Fourier coefficients $Q_{k}=(Q,\,e^{2\pi ikx})\ $of$\,Q$
(see~\eqref{eq10}) with respect to the orthonormal basis $\left\{  e^{2\pi
ikx}:k\in\mathbb{Z}\right\}  $, for $k\neq0$, are $\dfrac{q_{k}}{-2\pi ki}$.
Therefore, we have
\begin{equation}
Q(x)-Q_{0}=%
%TCIMACRO{\tsum \limits_{k\neq0}}%
%BeginExpansion
{\textstyle\sum\limits_{k\neq0}}
%EndExpansion
\frac{q_{k}}{-2\pi ki}e^{2\pi ikx}. \label{eq24}%
\end{equation}
Using this and~\eqref{eq2}, it is easy to verify that
\[
I_{1}=\int_{0}^{1}q(x)(Q(x)-Q_{0})^{2}\,dx.
\]
On the other hand, $(Q(x)-Q_{0})^{\prime}=q(x)$, $Q(1)=Q(0)=0$, and hence
\begin{equation}
\int_{0}^{1}q(x)(Q(x)-Q_{0})^{k}\,dx=\frac{1}{k+1}((Q(1)-Q_{0})^{k+1}%
-(Q(0)-Q_{0})^{k+1})=0, \label{eq25}%
\end{equation}
for $k=1,2,...$. The lemma is proved.
\end{proof}

\begin{lemma}
\label{l3} The equalities $I_{j}(n,q)=o(n^{2})$ for $j=2,3,4$ hold.
\end{lemma}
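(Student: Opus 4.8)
The plan is to show that each of the three sums
\[
I_{j}(n,q)=\sum_{k,l}\frac{q_{k}q_{l}q_{-k-l}}{(\text{two factors, each involving }4\pi n)},\qquad j=2,3,4,
\]
grows slower than $n^{2}$. The key structural fact is that in each of $I_{2},I_{3},I_{4}$ at least one of the two denominator factors is of the form $4\pi n-2\pi k$ or $4\pi n-2\pi k-2\pi l$, which is bounded below (in absolute value) away from zero only when the corresponding index avoids $2n$, but crucially is $\geq$ a fixed positive multiple of $n$ whenever the index in question stays in a bounded range. So the natural strategy is a dyadic / truncation argument: split the summation region according to whether the ``dangerous'' index is small (say $|k|\leq n$, resp. $|k+l|\leq n$) or large.

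Concretely, for $I_{4}(n,q)=\sum_{k,l}\frac{q_{k}q_{l}q_{-k-l}}{(4\pi n-2\pi k)(4\pi n-2\pi k-2\pi l)}$ I would first handle the main region where $|k|\leq n$ and $|k+l|\leq n$: there both factors are $\Theta(n)$ in modulus, so this part of the sum is bounded by $C n^{-2}\sum_{k,l}|q_{k}q_{l}q_{-k-l}|$, and since $q\in L_{1}[0,1]$ its Fourier coefficients are bounded, whence $\sum_{k,l}|q_{k}q_{l}q_{-k-l}|\le \|q\|_{\infty\text{(coeff)}}\sum_{k,l}|q_{k}q_{l}|<\infty$ — wait, that last sum need not converge for a general $L_1$ potential. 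Here is where one must be a little more careful: one uses instead the Parseval-type bound $\sum_{k}|q_{k}|^{2}<\infty$ is also not available for $L_{1}$. The cleaner route, matching the style of Lemma~\ref{l1} and Lemma~\ref{l2}, is to recognize these sums (after partial-fraction manipulation and using the Fourier expansions \eqref{eq22} and \eqref{eq24}) as integrals of the form $\int_{0}^{1}q(x)\,F_{n}(x)\,\overline{G_{n}(x)}\,dx$ where $F_{n},G_{n}$ are bounded in $L_{2}[0,1]$ uniformly in $n$ (they are partial ``primitives'' of $q(x)e^{i4\pi nx}$ type functions, exactly like $Q(x,2n)$). Then Cauchy–Schwarz gives an $O(1)$ bound on the integral, and the prefactors $\frac{1}{4\pi n}$ extracted in passing from the raw sum to $I_{j}$ (as was done to get \eqref{eq23}) already supply enough powers of $n^{-1}$; combined, this yields $I_{j}(n,q)=o(n^{2})$, in fact typically $I_{j}(n,q)=O(n)$ or better.

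So the steps, in order, are: (i) for each $j\in\{2,3,4\}$, apply a partial fraction decomposition to the $n$-dependent denominator factor(s), separating a purely $k,l$-dependent piece from a piece carrying an explicit $\frac{1}{4\pi n}$ or $\frac{1}{4\pi n - \cdots}$; (ii) identify the resulting $k,l$-sums with double integrals $\int_0^1 q(x)\,u_n(x)\,v_n(x)\,dx$ by inserting the Fourier expansions \eqref{eq22}, \eqref{eq24} of $Q(x,2n)$ and $Q(x)-Q_0$ — the indices $k+l\neq 0,2n$ and $k\neq 0,2n$ in the summation are precisely what make these expansions applicable; (iii) bound $u_n,v_n$ in $L_2[0,1]$ uniformly in $n$ (they are either $Q-Q_0$, which is fixed, or $Q(\cdot,2n)-Q_{2n,0}$, whose $L_2$ norm is controlled by $\sum_{k\neq 0,2n}|q_{-k}|^2/(4\pi n - 2\pi k)^2 \le C\|q\|_{L_1}^2$ uniformly); (iv) conclude by Cauchy–Schwarz that each integral is $O(1)$, hence each $I_j(n,q) = O(n) = o(n^2)$.

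The main obstacle I anticipate is step (ii)–(iii) for $I_{2}$ and $I_{3}$, where only one of the two denominator factors contains $n$: there the partial fraction trick must be applied so that the $\frac{1}{k+l}$ or $\frac{1}{k}$ factor pairs up correctly with one Fourier expansion while the $n$-dependent factor pairs with the other, and one has to check that the ``leftover'' $k,l$-only sum — which looks like $I_1$ but with a shifted index — is genuinely finite (it equals an integral $\int_0^1 q(Q-Q_0)^2\,dx$-type object, finite by the same reasoning as in Lemma~\ref{l2}), so that multiplying by the $O(n^{-1})$ prefactor gives $o(n^2)$ after rescaling by the $16\pi^2 n^2$ in \eqref{eq23}. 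A secondary subtlety is keeping track of the excluded indices $k,k+l\notin\{0,2n\}$ throughout the manipulations; adding and subtracting the missing terms contributes only $O(1/n)$ corrections and does not affect the conclusion.
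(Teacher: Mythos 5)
Your core move---rewriting each $I_{j}$ as an integral of the form $\int_{0}^{1}q(x)\,u_{n}(x)\,v_{n}(x)\,dx$ by inserting the Fourier expansions \eqref{eq22} and \eqref{eq24}---is exactly what the paper does; it shows $I_{2}=I_{3}=\int_{0}^{1}q(x)\bigl(Q(x)-Q_{0}\bigr)\bigl(Q(x,2n)-Q_{2n,0}\bigr)\,dx$ and $I_{4}=\int_{0}^{1}q(x)\bigl(Q(x,2n)-Q_{2n,0}\bigr)^{2}\,dx$. However, your step (iv) has a genuine gap. You propose to bound $u_{n}$ and $v_{n}$ in $L_{2}[0,1]$ uniformly in $n$ and then apply Cauchy--Schwarz, but that cannot close when $q$ is only in $L_{1}[0,1]$: you have a triple product $q\cdot u_{n}\cdot v_{n}$, and $q\in L_{1}$ together with $u_{n},v_{n}\in L_{2}$ gives no H\"older bound (the exponents $1,2,2$ do not pair). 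The tool that actually works is $L_{\infty}$ control of the nonsingular factors. Both $Q-Q_{0}$ and $Q(\cdot,2n)-Q_{2n,0}$ are uniformly bounded; note that for $Q(\cdot,2n)$ this has to come from the primitive definition, $|Q(x,2n)|\leq\int_{0}^{1}|q|+|q_{-2n}|\leq 2\|q\|_{L_{1}}$, and \emph{not} from the Fourier series \eqref{eq22}, whose coefficients $q_{-k}/(i(4\pi n-2\pi k))$ are not absolutely summable for general $q\in L_{1}$. With $L_{\infty}$ bounds, $L_{1}$--$L_{\infty}$ duality gives $|I_{j}|\leq\|q\|_{L_{1}}\|u_{n}v_{n}\|_{L_{\infty}}=O(1)$; the paper actually uses the sharper fact that $Q(\cdot,2n)\to 0$ \emph{uniformly} (a Riemann--Lebesgue consequence), giving $I_{j}=o(1)$, which is what the downstream use of \eqref{eq23} genuinely needs to get $a_{2}((2\pi n)^{2})=o(n^{-2})$---the $o(n^{2})$ in the lemma's statement appears to be a misprint for $o(1)$. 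Two further remarks: your step (i) of redoing a partial-fraction split on the $I_{j}$ is superfluous, since the $I_{j}$ in \eqref{eq23} are \emph{already} the output of that split and each converts to a single integral with no leftover sum; and your final claim $I_{j}=O(n)$ is off by a power of $n$---the integral representation gives $O(1)$ directly.
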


\begin{proof}
Using~\eqref{eq22} and~\eqref{eq24}, we obtain
\begin{equation}
I_{2}(n,q)=\int_{0}^{1}q(x)\left(  Q(x)-Q_{0}\right)  \left(  Q(x,2n)-Q_{2n,0}%
\right)  \,dx. \label{eq26}%
\end{equation}
Formula~\eqref{eq22} can be written in the form
\begin{equation}
Q(x,2n)-Q_{2n,0}=\sum_{\substack{l=-\infty\\l+k\neq0,2n}}^{\infty}%
\frac{q_{-k-l}e^{i2\pi(2n-k-l)x}}{i2\pi(2n-k-l)}. \label{eq27}%
\end{equation}
Using these notations, we obtain
\begin{equation}
I_{3}(n,q)=\int_{0}^{1}q(x)\left(  Q(x)-Q_{0}\right)  \left(  Q(x,2n)-Q_{2n,0}%
\right)  \,dx=I_{2}(n,q). \label{eq28}%
\end{equation}
In the same way, we establish that
\begin{equation}
I_{4}(n,q)=\int_{0}^{1}q(x)\left(  Q(x,2n)-Q_{2n,0}\right)  ^{2}\,dx.
\label{eq29}%
\end{equation}
Now the proof of the lemma follows from~\eqref{eq2},~\eqref{eq25}-\eqref{eq29}
and the following obvious statements: $Q(x,2n)$ converges uniformly, for
$x\in\lbrack0,1]$, to zero as $n\rightarrow\infty$ (see page 57 of~\cite{17}).
The lemma is proved.
\end{proof}

Now from~\eqref{eq14},~\eqref{eq18},~\eqref{eq23} and Lemmas~\ref{l1}%
-\ref{l3}, we obtain the following asymptotic formulas for the periodic eigenvalues:

\begin{theorem}
\label{t1} If~\eqref{eq12}, for $k=2n$, holds, then the eigenvalues
$\lambda_{n,j}$, for $n>N$ and $j=1,2$, are simple and satisfy the asymptotic
formula
\begin{equation}
\lambda_{n,j}=(2\pi n)^{2}+D(2n)+(-1)^{j}\left\vert q_{2n}-S_{2n}+2Q_{0}%
Q_{2n}\right\vert +o(n^{-2}) \label{eq30}%
\end{equation}
as $n\rightarrow\infty$, where $D(2n)$ is defined in Lemma~\ref{l1}.
\end{theorem}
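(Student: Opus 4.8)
The plan is to start from the second-order asymptotic formula~\eqref{eq14}, already proved in~\cite{17} under conditions~\eqref{eq11} and~\eqref{eq12}, and simply substitute the more explicit expressions for $a_1((2\pi n)^2)$ and $a_2((2\pi n)^2)$ furnished by Lemmas~\ref{l1}--\ref{l3}. Specifically, Lemma~\ref{l1} gives
\[
a_1((2\pi n)^2)=\frac{|q_{2n}|^2}{-16\pi^2 n^2}+D(2n),
\]
while~\eqref{eq23} together with Lemmas~\ref{l2} and~\ref{l3} gives
\[
a_2((2\pi n)^2)=\frac{1}{16\pi^2 n^2}\bigl(I_1+I_2+I_3+I_4\bigr)=\frac{1}{16\pi^2 n^2}\,o(n^2)=o(n^{-2}).
\]
Adding these, the term $|q_{2n}|^2/(-16\pi^2 n^2)$ is itself $o(n^{-2})$ since $q_{2n}=o(1)$ (indeed $q_{2n}\to 0$ by the Riemann--Lebesgue lemma), so it is absorbed into the error term. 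Hence $a_1((2\pi n)^2)+a_2((2\pi n)^2)=D(2n)+o(n^{-2})$, and plugging this into~\eqref{eq14} yields exactly~\eqref{eq30}. The simplicity of $\lambda_{n,j}$ for $n>N$ is inherited directly from~\cite{17}, since that is where~\eqref{eq14} is established.

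The one point requiring a little care is the hypothesis. The statement of Theorem~\ref{t1} invokes only condition~\eqref{eq12} for $k=2n$, whereas the formula~\eqref{eq14} from~\cite{17} was stated under both~\eqref{eq11} and~\eqref{eq12}. I would note that~\eqref{eq11} is used in~\cite{17} only to secure the cruder first-order formula~\eqref{eq13} and the simplicity of the eigenvalues via a non-degeneracy of $q_{2n}$; for the second-order formula~\eqref{eq14} one needs the quantity $|q_{2n}-S_{2n}+2Q_0Q_{2n}|$ to dominate the remainder $R_2(\lambda_{n,j})=O((\ln n/n)^3)$, which is precisely what~\eqref{eq12} provides. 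Thus the relevant hypothesis for~\eqref{eq30} is~\eqref{eq12} alone, and I would either cite the corresponding passage in~\cite{17} (pages 66--70) or briefly re-derive~\eqref{eq14} under~\eqref{eq12} only, by solving the $2\times 2$ system coming from~\eqref{eq4} for $n$ and $2n$ simultaneously and using the estimates $a_k,b_k,R_m=O((\ln n/n)^\bullet)$ recorded in the introduction.

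The bulk of the argument is therefore bookkeeping, and the genuinely substantive content has already been discharged in the three lemmas; the main (and essentially only) obstacle is making sure that each error estimate is of the stated order $o(n^{-2})$ after division by $16\pi^2 n^2$. In particular one must confirm that Lemma~\ref{l3}'s bound $I_j(n,q)=o(n^2)$, which rests on the uniform convergence $Q(x,2n)\to 0$ together with the vanishing integrals~\eqref{eq25}, indeed gives the sharp $o$ rather than merely $O$ — this is where the hypothesis $q\in L_1[0,1]$ (and the resulting uniform-in-$x$ decay of $Q(x,2n)$) is used. Assembling these, I would write: by~\eqref{eq14},
\[
\lambda_{n,j}=(2\pi n)^2+a_1((2\pi n)^2)+a_2((2\pi n)^2)+(-1)^j|q_{2n}-S_{2n}+2Q_0Q_{2n}|+o(n^{-2}),
\]
and by Lemmas~\ref{l1}--\ref{l3} together with $q_{2n}=o(1)$ the sum $a_1((2\pi n)^2)+a_2((2\pi n)^2)$ equals $D(2n)+o(n^{-2})$, which delivers~\eqref{eq30}.
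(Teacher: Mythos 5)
Your argument is exactly the paper's: the paper's own proof of Theorem~\ref{t1} is the single line preceding its statement, namely that~\eqref{eq30} follows from~\eqref{eq14},~\eqref{eq18},~\eqref{eq23} and Lemmas~\ref{l1}--\ref{l3}, which is precisely the substitution and bookkeeping you carry out. Your remark on the hypotheses is also apt: the paper's sentence reporting~\eqref{eq13} and~\eqref{eq14} pairs~\eqref{eq11} with~\eqref{eq13} and~\eqref{eq12} with~\eqref{eq14} (``respectively''), which is why Theorem~\ref{t1} invokes only~\eqref{eq12}.

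One line in your write-up does not hold as written and should be fixed, even though you land on the right conclusion. You write
\[
a_{2}((2\pi n)^{2})=\frac{1}{16\pi^{2}n^{2}}\bigl(I_{1}+I_{2}+I_{3}+I_{4}\bigr)=\frac{1}{16\pi^{2}n^{2}}\,o(n^{2})=o(n^{-2}),
\]
but $n^{-2}\cdot o(n^{2})=o(1)$, not $o(n^{-2})$. The issue comes from taking the stated order $o(n^{2})$ in Lemma~\ref{l3} at face value; that is not what its proof delivers. The proof represents $I_{2},I_{3},I_{4}$ as integrals whose integrands contain the factor $Q(x,2n)-Q_{2n,0}$ (once for $I_{2}=I_{3}$, squared for $I_{4}$), and $Q(x,2n)\to 0$ uniformly on $[0,1]$, so in fact $I_{j}(n,q)=o(1)$ for $j=2,3,4$ (and $I_{1}=0$). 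It is this $o(1)$ bound, divided by $16\pi^{2}n^{2}$, that yields $a_{2}((2\pi n)^{2})=o(n^{-2})$, which, together with $|q_{2n}|^{2}/(16\pi^{2}n^{2})=o(n^{-2})$ from Riemann--Lebesgue, gives $a_{1}((2\pi n)^{2})+a_{2}((2\pi n)^{2})=D(2n)+o(n^{-2})$ and hence~\eqref{eq30}. So your conclusion is right, but you should cite the bound Lemma~\ref{l3} actually establishes rather than the weaker order written in its statement.
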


The proof of the corresponding results for the antiperiodic problem can be
carried out in a similar way.

\begin{theorem}
\label{t2} If~\eqref{eq12}, for $k=2n-1$, holds, then the eigenvalues\ $\mu
_{n,j}$ of the operator $L_{\pi}(q)$, for $n>N$ and $j=1,2$, are simple and
satisfy the asymptotic formula
\begin{equation}
\mu_{n,j}=(2\pi n-\pi)^{2}+D(2n-1)+(-1)^{j}\left\vert q_{2n-1}-S_{2n-1}%
+2Q_{0}Q_{2n-1}\right\vert +o(n^{-2}), \label{eq31}%
\end{equation}
as $n\rightarrow\infty$.
\end{theorem}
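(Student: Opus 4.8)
The plan is to mirror the proof of Theorem~\ref{t1}, replacing the periodic setup with the antiperiodic one and tracking the index shift $2n \mapsto 2n-1$. The starting point is the antiperiodic analogue of the iteration formula~\eqref{eq4}: by the same construction as in~\cite{2}, for the eigenvalues $\mu_{n,j}$ of $L_\pi(q)$ the eigenfunctions $\Psi_{n,j}$ are expanded with respect to the basis $\{e^{i(2\pi n - \pi)x}\}$ adapted to the antiperiodic boundary condition, and one obtains a relation of the form
\[
(\mu_{n,j}-(2\pi n-\pi)^{2}-A_m(\mu_{n,j}))(\Psi_{n,j},e^{i(2\pi n-\pi)x}) - (q_{2n-1}+B_m(\mu_{n,j}))(\Psi_{n,j},e^{-i(2\pi n-\pi)x}) = R_m(\mu_{n,j}),
\]
where now the denominators involve $\mu_{n,j}-(2\pi n-\pi - 2\pi(n_1+\cdots+n_s))^2$ and the pairing coefficient appearing in $B_m$ is $q_{2n-1-n_1-\cdots-n_k}$. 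From here the estimates $a_k = O((\ln|n|/n)^k)$, $b_k = O((\ln|n|/n)^k)$, $R_m = O((\ln|n|/n)^{m+1})$ carry over verbatim, as do the refinements~\eqref{eq8}, \eqref{eq9}: one simply reads every occurrence of $2n$ as $2n-1$, since the proofs in~\cite{17}, \cite{13} only use that the relevant index tends to infinity and the elementary algebraic identities like~\eqref{eq19}.

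Next I would establish the antiperiodic versions of Lemmas~\ref{l1}--\ref{l3}. For the analogue of Lemma~\ref{l1}: writing $a_1((2\pi n-\pi)^2) = \sum_{k\neq 0, 2n-1} |q_k|^2 / (2\pi k)(2\pi(2n-1) - 2\pi k)$, the partial-fraction identity~\eqref{eq19} with $2n$ replaced by $2n-1$ gives the split into two sums; the first telescopes by the antisymmetry $|q_k|^2/(2\pi k) + |q_{-k}|^2/(2\pi(-k)) = 0$ to produce the term $|q_{2n-1}|^2/(-16\pi^2(n-\tfrac12)^2)$, which is absorbed into the $o(n^{-2})$ since this quantity is $O(n^{-2})$ and in fact we only need it to be part of the error or, more carefully, one defines $D(2n-1)$ exactly as in Lemma~\ref{l1} with the auxiliary function $Q(x, 2n-1)$, whose Fourier coefficients are computed by the same integration by parts. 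The analogue of Lemma~\ref{l2}, $I_1(q) = 0$, requires no change at all — it does not involve $n$. The analogue of Lemma~\ref{l3}, $I_j(n,q) = o(n^2)$ for $j=2,3,4$, follows because $Q(x, 2n-1)$ converges uniformly to zero on $[0,1]$ as $n\to\infty$ (the same Riemann–Lebesgue argument as on page~57 of~\cite{17}), combined with the identity~\eqref{eq25}, which is purely about $Q(x) - Q_0$ and again independent of the index.

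Finally I would assemble the pieces. Substituting the antiperiodic analogues of~\eqref{eq18} and~\eqref{eq23} into the antiperiodic analogue of~\eqref{eq14} — that is, $\mu_{n,j} = (2\pi n - \pi)^2 + a_1((2\pi n-\pi)^2) + a_2((2\pi n-\pi)^2) + (-1)^j|q_{2n-1} - S_{2n-1} + 2Q_0 Q_{2n-1}| + o(n^{-2})$, valid under~\eqref{eq12} for $k = 2n-1$ as established in~\cite{17} — and using $a_2((2\pi n-\pi)^2) = (16\pi^2(n-\tfrac12)^2)^{-1}(I_1 + I_2 + I_3 + I_4) = o(n^{-2})$ together with $a_1((2\pi n-\pi)^2) = D(2n-1) + o(n^{-2})$, we arrive at~\eqref{eq31}. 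The simplicity of $\mu_{n,j}$ for $n > N$ is inherited from the same argument in~\cite{17} that gave simplicity of $\lambda_{n,j}$, since condition~\eqref{eq12} at $k = 2n-1$ guarantees the two eigenvalues split at order $n^{-2}$.

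The main obstacle is bookkeeping rather than conceptual: one must be careful that every denominator, every Fourier index, and every "$2\pi n$" in the formulas from~\cite{2, 13, 17} is correctly translated to the antiperiodic normalization $2\pi n - \pi$ and the index $2n-1$, and that the auxiliary functions $Q(x, 2n-1)$ and the telescoping cancellations still go through — which they do, but verifying this cleanly is where the work lies. No new analytic input is needed beyond what was used for the periodic case, which is exactly why the statement can be dispatched by "the proof can be carried out in a similar way."
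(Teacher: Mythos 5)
Your proposal is correct and fills in exactly the translation that the paper compresses into the sentence ``The proof of the corresponding results for the antiperiodic problem can be carried out in a similar way'': you replace the index $2n$ by $2n-1$, the base eigenvalue $(2\pi n)^2$ by $(2\pi n - \pi)^2$, the basis $\{e^{i2\pi n x}\}$ by $\{e^{i(2n-1)\pi x}\}$, and rerun the partial-fraction split, the telescoping cancellation, and the uniform decay of $Q(x,2n-1)$, absorbing the $|q_{2n-1}|^2/(16\pi^2(n-\tfrac12)^2)$ term into $o(n^{-2})$ by Riemann--Lebesgue. This is the same route the authors have in mind, and no new analytic input is required.
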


Now, we take the Kronig-Penney model as an example and use this example to
illustrate the obtained asymptotic formulas~\eqref{eq30} and~\eqref{eq31}. The
Kronig-Penney model has been studied in many works (see, for example,~\cite{7}%
, ~\cite[Chapter 3]{1} and~\cite[Chapter 21]{14}). In this case, the potential
$q(x)$ has the form
\[
q(x)=%
\begin{cases}
a & \text{if }x\in\lbrack0,c]\\[2mm]%
b & \text{if }x\in(c,d],
\end{cases}
\]
and $q(x+d)=q(x)$, where $c\in(0,d)$. For simplicity of notation and without
loss of generality, we assume that $d=1$, $a<b$ and~\eqref{eq2} is satisfied.
Then, we have
\begin{equation}
q(x)=%
\begin{cases}
a & \text{if }x\in\lbrack0,c]\\[2mm]%
b & \text{if }x\in(c,1],
\end{cases}
\qquad\label{eq32}%
\end{equation}
where $a<0<b$ and%
\begin{equation}
ac+(1-c)b=0. \label{eq33}%
\end{equation}
To estimate the second term of the asymptotic formulas for the periodic
eigenvalues when $q$ is defined by formula~\eqref{eq32}, we need to calculate
the term $D(2n)$ in~\eqref{eq30}, since the terms $q_{k}$, $Q_{k}$, $S_{k}%
$\ were calculated in~\cite{17} (see pages 76-78 in~\cite{17}) and the
following equalities were obtained:
\begin{equation}
q_{k}=\frac{a-b}{2\pi ki}(1-e^{-2\pi kic}), \label{eq34}%
\end{equation}%
\begin{equation}
Q_{k}=\frac{q_{k}}{2\pi ki}=\frac{a-b}{(2\pi k)^{2}}(e^{-2\pi kic}-1),\qquad
Q_{0}=\frac{1}{2}b(c-1), \label{eq35}%
\end{equation}%
\begin{align}
&  S_{k}=\frac{a^{2}}{\pi ki}\biggl(\frac{e^{-2\pi ikc}-1}{(2\pi k)^{2}%
}\,-\frac{ce^{-2\pi ikc}}{2\pi ik}\biggr)\nonumber\\
&  \qquad+\frac{b^{2}}{\pi ki}\biggl(\frac{1-e^{-2\pi ikc}}{(2\pi k)^{2}%
}+\frac{ce^{-2\pi ikc}-1}{2\pi ik}\biggr)-\frac{b^{2}}{\pi ki}\biggl(\frac
{e^{-2\pi ikc}-1}{2\pi ik}\biggr). \label{eq36}%
\end{align}
Using the definition of $Q(x,k)$ (see Lemma~\ref{l1}), by direct calculations
we obtain
\begin{equation}
Q(x,k)=%
\begin{cases}
\dfrac{a}{i2\pi k}(e^{i2\pi kx}-1)-q_{-k}x & \text{if }x\in\lbrack0,c]\\[2mm]%
\dfrac{b}{i2\pi k}(e^{i2\pi kx}-1)-q_{-k}x+q_{-k} & \text{if }x\in(c,1].
\end{cases}
\label{eq37}%
\end{equation}
Therefore, we have
\begin{align}
Q_{k,0}  &  =\int_{0}^{c}\bigl(\dfrac{a}{i2\pi k}(e^{i2\pi kx}-1)-q_{-k}%
x\bigr)dx+\int_{c}^{1}\bigl(\dfrac{b}{i2\pi k}(e^{i2\pi kx}-1)-q_{-k}%
x+q_{-k}\bigr)dx\nonumber\\
&  =\frac{(b-a)(e^{i2\pi kc}-1)}{4\pi^{2}k^{2}}+q_{-k}(\frac{1}{2}%
-c)=\frac{(b-a)(e^{i2\pi kc}-1)}{4\pi^{2}k^{2}}+\frac{(a-b)(c-\frac{1}{2}%
)}{2\pi ki}(1-e^{2\pi kic})\nonumber\\
&  =\frac{(b-a)(e^{i2\pi kc}-1)}{4\pi^{2}k^{2}}+\frac{(a+b)(e^{i2\pi kc}%
-1)}{4\pi ki} \label{eq38}%
\end{align}
and
\begin{align}
&  D(k)=\frac{i}{2\pi k}\int_{0}^{1}q(x)Q(x,k)e^{-i2\pi kx}\,dx-\frac
{iQ_{k,0}}{2\pi k}\int_{0}^{1}q(x)e^{-i2\pi kx}\,dx\nonumber\\
&  \qquad=\frac{i}{2\pi k}\biggl(\int_{0}^{c}q(x)Q(x,k)e^{-i2\pi kx}%
\,dx+\int_{c}^{1}q(x)Q(x,k)e^{-i2\pi kx}\,dx\biggr)-\frac{iQ_{k,0}q_{k}}{2\pi
k}. \label{eq39}%
\end{align}
Using~\eqref{eq32},~\eqref{eq37}, and the integration by parts in the last
integrals give
\begin{align}
&  \int_{0}^{c}q(x)Q(x,k)e^{-i2\pi kx}\,dx=\int_{0}^{c}a\biggl(\dfrac{a}{i2\pi
k}(e^{i2\pi kx}-1)-q_{-k}x\biggr)e^{-i2\pi kx}\,dx\nonumber\\
&  \qquad=\dfrac{a^{2}}{i2\pi k}\int_{0}^{c}(1-e^{-i2\pi kx})\,dx-aq_{-k}%
\int_{0}^{c}xe^{-i2\pi kx}\,dx\nonumber\\
&  \qquad=\dfrac{a^{2}}{i2\pi k}\biggl(c+\dfrac{e^{-i2\pi kc}-1}{i2\pi
k}\biggr)+\dfrac{aq_{-k}}{i2\pi k}\biggl(ce^{-i2\pi kc}+\dfrac{e^{-i2\pi
kc}-1}{i2\pi k}\biggr) \label{eq40}%
\end{align}
and
\begin{align}
&  \int_{c}^{1}q(x)Q(x,k)e^{-i2\pi kx}\,dx=\int_{c}^{1}b\biggl(\dfrac{b}{i2\pi
k}(e^{i2\pi kx}-1)-q_{-k}x+q_{-k}\biggr)e^{-i2\pi kx}\,dx\nonumber\\
&  \qquad=\dfrac{b^{2}}{i2\pi k}\int_{c}^{1}(1-e^{-i2\pi kx})\,dx-bq_{-k}%
\int_{c}^{1}xe^{-i2\pi kx}\,dx+bq_{-k}\int_{c}^{1}e^{-i2\pi kx}\,dx\nonumber\\
&  \qquad=\dfrac{b^{2}}{i2\pi k}\biggl(1-c-\dfrac{e^{-i2\pi kc}-1}{i2\pi
k}\biggr)+\dfrac{bq_{-k}}{i2\pi k}\biggl(1-ce^{-i2\pi kc}-\dfrac{e^{-i2\pi
kc}-1}{i2\pi k}\biggr)+\dfrac{bq_{-k}}{i2\pi k}(e^{-i2\pi kc}-1). \label{eq41}%
\end{align}
Using~\eqref{eq38},~\eqref{eq40},~\eqref{eq41} and~\eqref{eq33}
in~\eqref{eq39} and doing some simplification, we obtain
\[
D(k)=\frac{-ab}{4\pi^{2}k^{2}}+O(\frac{1}{k^{3}}).
\]
Now using~\eqref{eq34}-\eqref{eq36} and~\eqref{eq33}, we obtain
\begin{align}
&  q_{k}-S_{k}+2Q_{0}Q_{k}=\frac{a-b}{2\pi ki}(1-e^{-2\pi kic})+\frac
{ba}{(2\pi k)^{2}}(e^{-2\pi ikc}+1)+O(k^{-3})\nonumber\\
&  \qquad=e^{-\pi kic}\biggl(\frac{a-b}{\pi k}\sin(\pi kc)+\frac{ba}{2(\pi
k)^{2}}\cos(\pi kc)\biggr)+O(k^{-3}). \label{eq42}%
\end{align}

In Theorem 2.1.11 of \cite{17}, we proved that if $c\in\mathbb{Q}$, then
$|q_{k}-S_{k}+2Q_{0}Q_{k}|>c_{0}k^{-2}$, for some positive constant $c_{0}$,
where $\mathbb{Q}$ is the set of rational numbers. Therefore the following
theorems follow from Theorem~\ref{t1} and Theorem~\ref{t2}, respectively.

\begin{theorem}
\label{t3} If $c\in\mathbb{Q}$, then the periodic eigenvalues $\lambda_{n,j}$,
for $n>N$ and $j=1,2$, are simple and satisfy the asymptotic formula
\begin{equation}
\lambda_{n,j}=(2\pi n)^{2}+\frac{-ab}{16\pi^{2}n^{2}}+(-1)^{j}\left\vert
q_{2n}-S_{2n}+2Q_{0}Q_{2n}\right\vert +o(n^{-2}) \label{eq43}%
\end{equation}
as $n\rightarrow\infty$.
\end{theorem}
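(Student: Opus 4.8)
The plan is to obtain Theorem~\ref{t3} as a direct specialization of Theorem~\ref{t1} to the Kronig--Penney potential~\eqref{eq32}, using the two computations already performed in the paragraphs just above the statement. First I would check that the sole hypothesis of Theorem~\ref{t1}, namely~\eqref{eq12} with $k=2n$, is automatically satisfied here. This is precisely the content of Theorem 2.1.11 of~\cite{17}: when $c\in\mathbb{Q}$ there is a constant $c_{0}>0$ such that $\left\vert q_{k}-S_{k}+2Q_{0}Q_{k}\right\vert >c_{0}k^{-2}$ for all sufficiently large $k$. Taking $\varepsilon=c_{0}$ in~\eqref{eq12} and restricting to even indices $k=2n$, Theorem~\ref{t1} applies, so for $n>N$ the eigenvalues $\lambda_{n,1},\lambda_{n,2}$ are simple and satisfy
\[
\lambda_{n,j}=(2\pi n)^{2}+D(2n)+(-1)^{j}\left\vert q_{2n}-S_{2n}+2Q_{0}Q_{2n}\right\vert +o(n^{-2}).
\]

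Next I would substitute the value of $D(2n)$. The simplification leading from~\eqref{eq39} through~\eqref{eq40},~\eqref{eq41} and~\eqref{eq33} gave $D(k)=\dfrac{-ab}{4\pi^{2}k^{2}}+O(k^{-3})$; putting $k=2n$ yields $D(2n)=\dfrac{-ab}{16\pi^{2}n^{2}}+O(n^{-3})$. Inserting this into the displayed formula and absorbing the $O(n^{-3})$ remainder into the existing $o(n^{-2})$ error term produces~\eqref{eq43} exactly. That is the whole argument.

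The only point requiring attention is the bookkeeping of error orders. The $o(n^{-2})$ term in~\eqref{eq30} is inherited from Theorem~\ref{t1} (hence ultimately from Lemmas~\ref{l1}--\ref{l3} and the bound on $R_{2}$), while the $O(k^{-3})$ tail of $D(k)$ must be traced through the integrations by parts~\eqref{eq40}--\eqref{eq41}; since $q_{-k}=O(k^{-1})$ and each leftover oscillatory integral contributes a further factor $O(k^{-1})$ beyond the explicit leading terms, this causes no difficulty. I do not anticipate any genuine obstacle: Theorem~\ref{t3} is effectively a corollary of Theorem~\ref{t1}, obtained once $D(2n)$ has been evaluated for the potential~\eqref{eq32} and the rationality of $c$ has been used to secure condition~\eqref{eq12}.
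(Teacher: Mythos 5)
Your argument is correct and coincides with the paper's own: the paper proves Theorem~\ref{t3} by invoking Theorem~2.1.11 of~\cite{17} to verify condition~\eqref{eq12} for $k=2n$ when $c\in\mathbb{Q}$, then specializing Theorem~\ref{t1} and substituting the computed value $D(k)=\frac{-ab}{4\pi^{2}k^{2}}+O(k^{-3})$ at $k=2n$. Your bookkeeping of the $O(n^{-3})$ remainder being absorbed into the $o(n^{-2})$ error term is exactly what the paper leaves implicit.
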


and

\begin{theorem}
\label{t4} If $c\in\mathbb{Q}$, then the antiperiodic eigenvalues\ $\mu_{n,j}$
of the operator $L_{\pi}(q)$, for $n>N$ and $j=1,2$, are simple and satisfy
the asymptotic formula
\begin{equation}
\mu_{n,j}=(2\pi n-\pi)^{2}+\frac{-ab}{4\pi^{2}(2n-1)^{2}}+(-1)^{j}\left\vert
q_{2n-1}-S_{2n-1}+2Q_{0}Q_{2n-1}\right\vert +o(n^{-2}) \label{eq44}%
\end{equation}
as $n\rightarrow\infty$.
\end{theorem}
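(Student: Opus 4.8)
The plan is to derive~\eqref{eq44} by specializing the general antiperiodic formula of Theorem~\ref{t2} to the Kronig-Penney potential~\eqref{eq32}, exactly as~\eqref{eq43} is obtained from Theorem~\ref{t1}. First I would check that the hypothesis of Theorem~\ref{t2}, namely condition~\eqref{eq12} for $k=2n-1$, holds once $c\in\mathbb{Q}$. This is precisely Theorem 2.1.11 of~\cite{17}, which asserts $|q_{k}-S_{k}+2Q_{0}Q_{k}|>c_{0}k^{-2}$ for all sufficiently large $k$ with a fixed constant $c_{0}>0$; in particular the inequality is valid for the odd indices $k=2n-1>N$. Hence Theorem~\ref{t2} applies: the eigenvalues $\mu_{n,j}$ are simple for $n>N$, and
\[
\mu_{n,j}=(2\pi n-\pi)^{2}+D(2n-1)+(-1)^{j}\left\vert q_{2n-1}-S_{2n-1}+2Q_{0}Q_{2n-1}\right\vert +o(n^{-2}).
\]

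It then remains to substitute the value of $D(2n-1)$. The evaluation of $D(k)$ carried out above --- starting from the explicit form~\eqref{eq37} of $Q(x,k)$, the value~\eqref{eq38} of $Q_{k,0}$, the integration-by-parts identities~\eqref{eq40}--\eqref{eq41}, and the normalization~\eqref{eq33} --- does not rely on the parity of $k$, since $D(k)$ is built only from the potential $q$ and is independent of the boundary parameter $t$. Thus the same computation gives, for every nonzero integer $k$,
\[
D(k)=\frac{-ab}{4\pi^{2}k^{2}}+O\!\left(\frac{1}{k^{3}}\right).
\]
Taking $k=2n-1$ yields $D(2n-1)=\dfrac{-ab}{4\pi^{2}(2n-1)^{2}}+O(n^{-3})$, and since $O(n^{-3})=o(n^{-2})$ this term is absorbed into the remainder; inserting it into the previous display gives~\eqref{eq44}.

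Because the argument is essentially a substitution, I do not expect a genuine obstacle. The only points that need care are (i) confirming that the lower bound of Theorem 2.1.11 of~\cite{17} really covers the odd indices $k=2n-1$ used here, and (ii) checking that the earlier evaluation $D(k)=\dfrac{-ab}{4\pi^{2}k^{2}}+O(k^{-3})$ was obtained for a general integer $k$ and not merely for even $k$; both are immediate from formulas~\eqref{eq34}--\eqref{eq41}, which are stated for arbitrary $k$. Once these are noted, Theorem~\ref{t4} follows at once from Theorem~\ref{t2}.
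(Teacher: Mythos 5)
Your argument matches the paper's: verify the hypothesis of Theorem~\ref{t2} via Theorem 2.1.11 of~\cite{17} with $k=2n-1$, then substitute $D(2n-1)=\frac{-ab}{4\pi^{2}(2n-1)^{2}}+O(n^{-3})$ using the computation of $D(k)$ already carried out for arbitrary $k$. This is correct and takes the same route as the paper.
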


Now, let us consider the general case $c\in\mathbb{R}$. By~\eqref{eq42}, we
have
\[
|q_{k}-S_{k}+2Q_{0}Q_{k}|=|\alpha\sin(\pi kc)+\beta\cos(\pi kc)|+O(\frac
{1}{k^{3}}),
\]
where $\alpha=\dfrac{a-b}{\pi k}$ and $\beta=\dfrac{ab}{2\pi^{2}k^{2}}$. There
exists $\theta\in\lbrack0,2\pi)$ such that
\[
\cos\theta=\frac{\alpha}{\sqrt{\alpha^{2}+\beta^{2}}},\qquad\sin\theta
=\frac{\beta}{\sqrt{\alpha^{2}+\beta^{2}}},
\]
where $\sqrt{\alpha^{2}+\beta^{2}}=\dfrac{b-a}{\pi k}+O(\dfrac{1}{k^{3}})$.
Then, we have
\[
|q_{k}-S_{k}+2Q_{0}Q_{k}|=\frac{b-a}{\pi k}|\sin(\pi kc+\theta)|+O(\frac
{1}{k^{3}}),
\]
Therefore, if there exists $\varepsilon>0$ such that the inequality
\begin{equation}
|\sin(\pi kc+\theta)|>\frac{\varepsilon}{k} \label{eq45}%
\end{equation}
is satisfied, then~\eqref{eq12} holds. Thus the following theorems follow from
Theorem~\ref{t1} and Theorem~\ref{t2}, respectively.

\begin{theorem}
\label{t5} If~\eqref{eq45} is satisfied for $k=2n$, then the periodic
eigenvalues $\lambda_{n,j}$, for $n>N$ and $j=1,2$, are simple and satisfy
asymptotic formula~\eqref{eq43}.
\end{theorem}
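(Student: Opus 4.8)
The plan is short: the substantive computations have all been done in the paragraphs preceding the statement, so it remains only to check that hypothesis~\eqref{eq45} feeds condition~\eqref{eq12} into Theorem~\ref{t1}, and then to read off~\eqref{eq43} from~\eqref{eq30} using the value of $D(2n)$.

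First I would take the identity established just above the statement, namely
\[
|q_k - S_k + 2Q_0 Q_k| = \frac{b-a}{\pi k}\,\bigl|\sin(\pi k c + \theta)\bigr| + O(k^{-3}),
\]
which holds for all large $k$, and specialize it to $k = 2n$. Since $a < 0 < b$, the amplitude $\frac{b-a}{\pi k}$ is a positive multiple of $k^{-1}$. Assuming~\eqref{eq45} for $k = 2n$, i.e. $|\sin(\pi k c + \theta)| > \varepsilon/k$, I obtain
\[
|q_{2n} - S_{2n} + 2Q_0 Q_{2n}| > \frac{(b-a)\varepsilon}{\pi (2n)^2} - C (2n)^{-3} \geq \varepsilon'(2n)^{-2}
\]
for every $n$ with $2n > N$, where $\varepsilon'$ is any constant with $0 < \varepsilon' < (b-a)\varepsilon/\pi$ and $N$ is enlarged if necessary so that the cubic tail is dominated; this is precisely~\eqref{eq12} with $k = 2n$.

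Having verified~\eqref{eq12}, I would invoke Theorem~\ref{t1} to conclude that $\lambda_{n,j}$ is simple for $n > N$ and satisfies~\eqref{eq30}. Finally I substitute the earlier evaluation $D(k) = -ab/(4\pi^2 k^2) + O(k^{-3})$, so that $D(2n) = -ab/(16\pi^2 n^2) + O(n^{-3})$, and absorb the $O(n^{-3})$ term into the $o(n^{-2})$ remainder of~\eqref{eq30}; what remains is exactly~\eqref{eq43}. The only place that needs a moment's attention is the dependence of the threshold $N$ on $\varepsilon$, $a$, $b$ when discarding the $O(k^{-3})$ term, together with the use of $b - a > 0$; there is no genuine obstacle here, the argument being a direct assembly of the preceding lemmas and Theorem~\ref{t1}.
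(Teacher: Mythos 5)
Your proposal is correct and takes essentially the same route as the paper: it checks that hypothesis~\eqref{eq45} yields~\eqref{eq12} via the identity $|q_k - S_k + 2Q_0 Q_k| = \frac{b-a}{\pi k}|\sin(\pi kc + \theta)| + O(k^{-3})$, invokes Theorem~\ref{t1} to obtain~\eqref{eq30}, and then substitutes $D(2n) = -ab/(16\pi^2 n^2) + O(n^{-3})$, absorbing the cubic remainder into $o(n^{-2})$ to arrive at~\eqref{eq43}. The paper records this chain in a single sentence following the displayed computation of $|q_k - S_k + 2Q_0 Q_k|$, whereas you spell out the quantitative bookkeeping (choice of $\varepsilon'$ and enlargement of $N$) explicitly.
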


and

\begin{theorem}
\label{t6} If~\eqref{eq45} is satisfied for $k=2n-1$, then the antiperiodic
eigenvalues\ $\mu_{n,j}$ of the operator $L_{\pi}(q)$, for $n>N$ and $j=1,2$,
are simple and satisfy asymptotic formula~\eqref{eq44}.
\end{theorem}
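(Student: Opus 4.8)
The plan is to obtain Theorem~\ref{t6} from Theorem~\ref{t2} in exactly the way Theorem~\ref{t5} is obtained from Theorem~\ref{t1}: the only work is to pass from hypothesis~\eqref{eq45} to hypothesis~\eqref{eq12} and to compute the explicit term $D(2n-1)$ that occurs in~\eqref{eq31}.

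First I would verify that~\eqref{eq45}, for $k=2n-1$, forces~\eqref{eq12} for the same $k$. By~\eqref{eq42} and the reduction carried out just before the theorem,
\[
\left\vert q_{k}-S_{k}+2Q_{0}Q_{k}\right\vert =\frac{b-a}{\pi k}\,\left\vert \sin(\pi kc+\theta)\right\vert +O(k^{-3}),
\]
where $\theta=\theta_{k}$ is the angle defined above (it depends on $k$ but stays bounded, in fact $\theta_{k}\to\pi$ since $\alpha<0$, $\beta<0$, $\left\vert\beta\right\vert=o(\left\vert\alpha\right\vert)$). Under~\eqref{eq45} the first term is at least $(b-a)\varepsilon/(\pi k^{2})$, and since $b>a$ this dominates the $O(k^{-3})$ remainder for all sufficiently large $k$; hence $\left\vert q_{k}-S_{k}+2Q_{0}Q_{k}\right\vert \geq\varepsilon' k^{-2}$ for some $\varepsilon'>0$ and all $k=2n-1>N$ with $N$ enlarged if necessary, which is~\eqref{eq12}.

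Now Theorem~\ref{t2} applies and gives that the eigenvalues $\mu_{n,j}$ ($n>N$, $j=1,2$) are simple and satisfy~\eqref{eq31}. It remains only to substitute the value of $D(2n-1)$. Using~\eqref{eq32} and~\eqref{eq37} in the definition of $D(k)$ from Lemma~\ref{l1}, evaluating the integrals as in~\eqref{eq38}--\eqref{eq41}, and invoking the zero-mean relation~\eqref{eq33} to cancel the leading contributions, one arrives at $D(k)=\dfrac{-ab}{4\pi^{2}k^{2}}+O(k^{-3})$ --- the same computation already indicated in the text for the periodic term $D(2n)$. Taking $k=2n-1$ gives $D(2n-1)=\dfrac{-ab}{4\pi^{2}(2n-1)^{2}}+O(n^{-3})$, and absorbing $O(n^{-3})$ into the $o(n^{-2})$ error converts~\eqref{eq31} into~\eqref{eq44}.

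I do not expect a real obstacle: all the analytic content sits in Lemmas~\ref{l1}--\ref{l3} and Theorems~\ref{t1}--\ref{t2}, which are already at hand, and the Fourier-coefficient formulas~\eqref{eq34}--\eqref{eq41} are routine computations for the step potential~\eqref{eq32}. The only point needing a moment's attention is the $k$-dependence of $\theta$ in the implication~\eqref{eq45}$\Rightarrow$\eqref{eq12}; since $\theta_{k}$ converges this is harmless, but~\eqref{eq45} must be understood with $\theta=\theta_{k}$ as defined immediately above the statement of the theorem.
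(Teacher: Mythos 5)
Your proof is correct and follows essentially the same path the paper takes: establish the implication~\eqref{eq45}$\Rightarrow$\eqref{eq12} via the reduction carried out in~\eqref{eq42} and the paragraph preceding the theorems, invoke Theorem~\ref{t2}, and substitute the already-computed value $D(k)=\dfrac{-ab}{4\pi^{2}k^{2}}+O(k^{-3})$ at $k=2n-1$. Your added remarks on the $k$-dependence of $\theta$ and the dominance of the $O(k^{-3})$ remainder are harmless clarifications of details the paper leaves implicit.
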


\section{On the Small Eigenvalues}

In this section, we consider the small periodic and antiperiodic eigenvalues
of the Schr\"{o}dinger operator $L(q)$ with potential~\eqref{eq32}. We shall
focus on the operator $L_{0}(q)$ with potential~\eqref{eq32}. The
investigation of $L_{\pi}(q)$ is similar.

It is known that~\cite{6}
\[
(2\pi n)^{2}-M\leq\lambda_{n,j}\leq(2\pi n)^{2}+M,
\]
for $n\geq1$, where $M=\max\{|a|,b\}$. Besides, $|\lambda_{0}|\leq M$.

If $k\neq\pm n$, then
\begin{align}
&  |\lambda_{n,j}-(2\pi k)^{2}|\geq|(2\pi n)^{2}-(2\pi k)^{2}|-M=4\pi
^{2}|n-k||n+k|-M\nonumber\\
&  \qquad\geq4\pi^{2}(2n-1)-M, \label{eq46}%
\end{align}
for $n\geq1$ and under the assumption $M\leq\dfrac{4\pi^{2}(2n-1)}{3}$. If
$n=1$, we have $|\lambda_{1,j}|\leq4\pi^{2}+M$ and
\[
|\lambda_{1,j}-(2\pi k)^{2}|\geq||\lambda_{1,j}|-(2\pi k)^{2}|\geq16\pi
^{2}-|\lambda_{1,j}|\geq12\pi^{2}-M,
\]
for $|k|\geq2$. Moreover, if $n\geq2$, we have $|\lambda_{n,j}|\geq
|\lambda_{2,j}|\geq16\pi^{2}-M$ and
\[
|\lambda_{n,j}-(2\pi k)^{2}|\geq||\lambda_{2,j}|-(2\pi k)^{2}|\geq
|\lambda_{2,j}|-4\pi^{2}\geq12\pi^{2}-M,
\]
for $k\neq\pm n$. The analogous inequalities can be written for the
antiperiodic eigenvalues, from the inequalities
\[
(2n-1)^{2}\pi^{2}-M\leq\mu_{n,j}\leq(2n-1)^{2}\pi^{2}+M,
\]
for $n\geq2$. If $k\neq\pm n$, then
\begin{align}
&  |\mu_{n,j}-(2k-1)^{2}\pi^{2}|\geq|(2n-1)^{2}\pi^{2}-(2k-1)^{2}\pi
^{2}|-M=4\pi^{2}|n-k||n+k-1|-M\nonumber\\
&  \qquad\geq4\pi^{2}(2n-2)-M, \label{eq47}%
\end{align}
for $n\geq2$, under the assumption $M\leq\dfrac{8\pi^{2}(n-1)}{3}$. If $n=1$,
we have $|\mu_{1,j}|\leq\pi^{2}+M$ and
\[
|\mu_{1,j}-(2k-1)^{2}\pi^{2}|\geq||\mu_{1,j}|-(2k-1)^{2}\pi^{2}|\geq9\pi
^{2}-|\mu_{1,j}|\geq8\pi^{2}-M,
\]
for $|k|\geq2$, and we assume $M\leq8\pi^{2}/3$, for $n=1$. Besides, if
$n\geq2$, we have $|\mu_{n,j}|\geq|\mu_{2,j}|\geq9\pi^{2}-M$ and
\[
|\mu_{n,j}-(2k-1)^{2}\pi^{2}|\geq||\mu_{2,j}|-(2k-1)^{2}\pi^{2}|\geq|\mu
_{2,j}|-\pi^{2}\geq8\pi^{2}-M,
\]
for $k\neq\pm n$.

We note that, the iteration formula~\eqref{eq4} was obtained in~\cite{2} and
has also been used in the previous section for the large eigenvalues to obtain
subtle asymptotic formulas. In this section, we find conditions on
potential~\eqref{eq32} for which the iteration formula~\eqref{eq4} is also
valid for the small eigenvalues, as $m$ tends to infinity. We also note that,
it is not easy to give such conditions, the calculations are very technical
and long. Since the potential $q$ is real, we have the following:
\begin{equation}
q_{-k}=\overline{q_{k}},\qquad v_{n,j}=\overline{u_{n,j}}, \label{eq48}%
\end{equation}
where $u_{n,j}=(\Psi_{n,j},e^{i2\pi nx})$ and $v_{n,j}=(\Psi_{n,j},e^{-i2\pi
nx})$ (see~(2.1.83) in~\cite{17}). Now, in order to give the main results of
this section, we prove the following lemmas. Without loss of generality, we
assume that $\Psi_{n,j}(x)$ is a normalized eigenfunction corresponding to the
eigenvalue $\lambda_{n,j}$. First, we consider the case $n\geq1$.

\begin{lemma}
\label{l4} If $M\leq\dfrac{4\pi^{2}(2n-1)}{3}$, where $M=\max\{|a|,b\}$, then
the statements

\textbf{(a)} $\lim_{m\rightarrow\infty}R_{m}(\lambda_{n,j})=0$

\textbf{(b)} $|u_{n,j}|>0$

are valid, where $R_{m}(\lambda_{n,j})$\ is defined by~\eqref{eq7}.

\begin{proof}
\textbf{ (a)} Considering the greatest summands of $R_{2m+1}(\lambda_{n,j})$
in absolute value and by~\eqref{eq46}, we obtain for $n=1$,
\begin{align*}
&  |R_{2m+1}(\lambda_{1,j})|<\frac{4^{m}|q_{1}||q_{-2}|^{m}|q_{2}|^{m}%
M}{|\lambda_{1,j}|^{m+1}|\lambda_{1,j}-16\pi^{2}|^{m}}+\frac{4^{m}%
|q_{-1}||q_{-2}|^{m}|q_{2}|^{m}M}{|\lambda_{1,j}|^{m}|\lambda_{1,j}-16\pi
^{2}|^{m+1}}+\frac{4^{2m}|q_{-1}|^{m+1}|q_{1}|^{m}M}{|\lambda_{1,j}-16\pi
^{2}|^{m+1}|\lambda_{1,j}-36\pi^{2}|^{m}}\\
&  \qquad<\frac{4^{m}(b-a)^{2m+1}M}{2^{2m}\pi^{2m+1}(4\pi^{2}-M)^{m+1}%
(12\pi^{2}-M)^{m}}+\frac{4^{m}(b-a)^{2m+1}M}{2^{2m}\pi^{2m+1}(4\pi^{2}%
-M)^{m}(12\pi^{2}-M)^{m+1}}\\
&  \qquad+\frac{4^{2m}(b-a)^{2m+1}M}{\pi^{2m+1}(12\pi^{2}-M)^{m+1}(32\pi
^{2}-M)^{m}}<\frac{5\pi}{3}(\frac{1}{4\pi^{2}})^{m}+\frac{\pi}{3}(\frac
{4}{23\pi^{2}})^{m},
\end{align*}
for $n=2$,
\begin{align*}
&  |R_{2m+1}(\lambda_{2,j})|<\frac{4^{m}|q_{1}||q_{-2}|^{m}|q_{2}|^{m}%
M}{|\lambda_{2,j}-4\pi^{2}|^{2m+1}}+\frac{4^{m}|q_{2}||q_{-1}|^{m}|q_{1}%
|^{m}M}{|\lambda_{2,j}-4\pi^{2}|^{m}|\lambda_{2,j}|^{m+1}}+\frac{4^{2m}%
|q_{-1}||q_{-2}|^{m}|q_{2}|^{m}M}{|\lambda_{2,j}-4\pi^{2}|^{m+1}|\lambda
_{2,j}-36\pi^{2}|^{m}}\\
&  \qquad<\frac{4^{m}(b-a)^{2m+1}M}{2^{2m}\pi^{2m+1}(12\pi^{2}-M)^{2m+1}%
}+\frac{4^{m}(b-a)^{2m+1}M}{2^{2m}\pi^{2m+1}(12\pi^{2}-M)^{m}(16\pi
^{2}-M)^{m+1}}\\
&  \qquad+\frac{4^{2m}(b-a)^{2m+1}M}{\pi^{2m+1}(12\pi^{2}-M)^{m+1}(20\pi
^{2}-M)^{m}}<4\pi(\frac{1}{\pi^{2}})^{m}+\frac{8\pi}{3}(\frac{2}{3\pi^{2}%
})^{m}+4\pi(\frac{8}{\pi^{2}})^{m},
\end{align*}
and in general, we have $|R_{2m+1}(\lambda_{n,j})|<dr^{m}$, for $n\geq3$, for
some constant $d>0$ and $0<r<1$. \ Therefore, $\lim_{m\rightarrow\infty}%
R_{m}(\lambda_{n,j})=0$. Similarly, we prove that $\lim_{m\rightarrow\infty
}R_{m}^{\ast}(\lambda_{n,j})=0$.

\textbf{(b)} Suppose the contrary, $u_{n,j}=0$. Since the system of root
functions $\{e^{2\pi ikx}:k\in\mathbb{Z}\}$\ of $L_{0}(0)$ forms an
orthonormal basis for $L_{2}[0,1]$ and by~\eqref{eq48}, we have the
decomposition
\[
\Psi_{n,j}=u_{n,j}e^{i2\pi nx}+\overline{u_{n,j}}e^{-i2\pi nx}+\sum
_{k\in\mathbb{Z},k\neq\pm n}\left(  \Psi_{n,j},e^{i2\pi kx}\right)  e^{i2\pi
kx}%
\]
for the normalized eigenfunction $\Psi_{n,j}$ corresponding to the eigenvalue
$\lambda_{n,j}$ of $L_{0}(q)$. By Parseval's equality, we obtain
\begin{equation}
\sum_{k\in\mathbb{Z},k\neq\pm n}|(\Psi_{n,j},e^{i2\pi kx})|^{2}=1.
\label{eq49}%
\end{equation}
First, we consider the case $n=1$. Using the relation
\[
(\lambda_{N,j}-(2\pi n)^{2})(\Psi_{N,j},e^{i2\pi nx})=(q\Psi_{N,j},e^{i2\pi
nx}),
\]
\eqref{eq46}, and the Bessel inequality, we obtain
\begin{align*}
&  \sum_{k\in\mathbb{Z},k\neq\pm1}|(\Psi_{1,j},e^{i2\pi kx})|^{2}=\sum
_{k\in\mathbb{Z},k\neq\pm1}\frac{|(q\Psi_{1,j},e^{i2\pi kx})|^{2}}%
{|\lambda_{1,j}-(2\pi k)^{2}|^{2}}\\
&  \qquad\leq\frac{1}{(4\pi^{2}-M)^{2}}\sum_{k\in\mathbb{Z},k\neq\pm1}%
|(q\Psi_{1,j},e^{i2\pi kx})|^{2}<\frac{M^{2}}{(4\pi^{2}-M)^{2}}<1,
\end{align*}
and in the case $n\geq2$, we have
\begin{align*}
&  \sum_{k\in\mathbb{Z},k\neq\pm n}|(\Psi_{n,j},e^{i2\pi kx})|^{2}=\sum
_{k\in\mathbb{Z},k\neq\pm n}\frac{|(q\Psi_{n,j},e^{i2\pi kx})|^{2}}%
{|\lambda_{n,j}-(2\pi k)^{2}|^{2}}\\
&  \qquad\leq\frac{1}{(4\pi^{2}(2n-1)-M)^{2}}\sum_{k\in\mathbb{Z},k\neq\pm
n}|(q\Psi_{n,j},e^{i2\pi kx})|^{2}<\frac{M^{2}}{(4\pi^{2}(2n-1)-M)^{2}}<1,
\end{align*}
which contradicts~\eqref{eq49} and completes the proof.
\end{proof}
\end{lemma}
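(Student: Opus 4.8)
### Proof Proposal for Lemma~\ref{l4}

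The plan is to prove the two statements by the same elementary device used throughout this section: bound the relevant quantity by its largest summand in absolute value, using the denominator estimates~\eqref{eq46} together with the crude bound $|q_k|\le (b-a)/(\pi|k|)$ coming from~\eqref{eq34}, and then force the resulting geometric ratio to be $<1$ via the hypothesis $M\le \tfrac{4\pi^2(2n-1)}{3}$.

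For part (a), I would write $R_m(\lambda_{n,j})$ as in~\eqref{eq7} and separate the analysis by the size of $n$. For $n=1$ and $n=2$ the dangerous denominators are those that can be small, namely $|\lambda_{n,j}-(2\pi n)^2|\ge$ (something like $4\pi^2-M$ resp.\ $12\pi^2-M$, using the interval bounds established just above the lemma), while every other factor $|\lambda_{n,j}-(2\pi(n-n_1-\cdots-n_s))^2|$ is bounded below by a fixed positive constant; since $q\in L_1[0,1]$ we also have $|q_k|\le b-a$ (indeed $|q_k|$ decays like $1/|k|$), and the combinatorial count of the summands with a fixed "shape" is at most $4^m$ (two forbidden residues at each of the $m$ steps doubling an index, exactly as in the displayed estimates). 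Collecting these gives $|R_{2m+1}(\lambda_{n,j})|\le d\,r^m$ with $0<r<1$, provided $M$ is small enough that the smallest denominator exceeds the numerator's growth — this is where $M\le \tfrac{4\pi^2(2n-1)}{3}$ enters, exactly as in~\eqref{eq46}. For $n\ge 3$ the gap $4\pi^2(2n-1)-M$ is so large that the geometric ratio is automatically $<1$ and the same bound $|R_{2m+1}(\lambda_{n,j})|<d\,r^m$ holds uniformly. Taking $m\to\infty$ gives statement (a); the analogous quantity $R_m^\ast(\lambda_{n,j})$ (the one attached to $(\Psi_{n,j},e^{-i2\pi nx})$, obtained by the symmetric iteration) is handled identically.

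For part (b), I would argue by contradiction: assume $u_{n,j}=(\Psi_{n,j},e^{i2\pi nx})=0$. By~\eqref{eq48} this also forces $v_{n,j}=\overline{u_{n,j}}=0$, so the normalized eigenfunction $\Psi_{n,j}$ has no component along $e^{\pm i2\pi nx}$ and hence $\sum_{k\neq\pm n}|(\Psi_{n,j},e^{i2\pi kx})|^2=1$ by Parseval~\eqref{eq49}. On the other hand, for each $k\neq\pm n$ the eigenvalue equation written in the Fourier basis gives $(\lambda_{n,j}-(2\pi k)^2)(\Psi_{n,j},e^{i2\pi kx})=(q\Psi_{n,j},e^{i2\pi kx})$, so dividing and using the denominator bound~\eqref{eq46} (for $n=1$ the bound $4\pi^2-M$, for $n\ge 2$ the bound $4\pi^2(2n-1)-M$) together with Bessel's inequality $\sum_k|(q\Psi_{n,j},e^{i2\pi kx})|^2\le\|q\Psi_{n,j}\|^2\le M^2$ yields $\sum_{k\neq\pm n}|(\Psi_{n,j},e^{i2\pi kx})|^2\le M^2/(4\pi^2(2n-1)-M)^2$. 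Under $M\le\tfrac{4\pi^2(2n-1)}{3}$ this last quantity is $<1$, contradicting~\eqref{eq49}; hence $u_{n,j}\neq0$.

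The main obstacle is purely bookkeeping in part (a): one must be careful to identify, among the $O(4^m)$ terms of $R_{2m+1}$, precisely which chain of partial sums $n-n_1-\cdots-n_s$ is allowed to hit the small denominators (only the residue class of $n$ itself, since the summation constraints exclude $0$ and $2n$), and to check that no term acquires more than one small factor — this is what guarantees the common ratio is genuinely $<1$ rather than merely bounded. Once the "worst-case shape" of a summand is correctly pinned down, the estimates are the routine geometric-series bounds displayed in the statement, and the rest of the argument is immediate.
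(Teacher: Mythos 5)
Your proposal is correct and follows essentially the same route as the paper: for (a) you bound $R_{2m+1}$ by its largest-magnitude summands (with the $4^m$ combinatorial count and the denominator lower bounds from~\eqref{eq46}, using $|q_k|\le(b-a)/(\pi|k|)$), identify the geometric ratio that the hypothesis $M\le\frac{4\pi^2(2n-1)}{3}$ forces below $1$, and dispose of $R_m^{\ast}$ symmetrically; for (b) you argue by contradiction via Parseval~\eqref{eq49}, the Fourier-side eigenvalue identity, and Bessel's inequality with the same denominator bound. No gaps; this is the paper's argument.
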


Now, we consider the case $n=0$.

\begin{lemma}
\label{l5} If $M\leq4\pi^{2}/3$,\ then the statements

\textbf{(a)} $\lim_{m\rightarrow\infty}R_{m}(\lambda_{0})=0,$ \qquad
\textbf{(b)} $|(\Psi_{0},1)|>0$

are valid.

\begin{proof}
The proof is similar to the proof of Lemma~\ref{l1}.
\end{proof}
\end{lemma}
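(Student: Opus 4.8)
The plan is to mirror the structure of the proof of Lemma~\ref{l4}, now specialized to the first periodic eigenvalue $\lambda_0$, which plays the role of the ``$n=0$'' case in the iteration scheme. For part \textbf{(a)}, I would first record the separation estimate: since $|\lambda_0|\le M$ and the unperturbed eigenvalues near $\lambda_0$ other than $0$ itself are $(2\pi k)^2$ with $|k|\ge 1$, we get $|\lambda_0-(2\pi k)^2|\ge 4\pi^2-M\ge 4\pi^2-4\pi^2/3=8\pi^2/3$ for all $k\neq 0$, and more generally $|\lambda_0-(2\pi k)^2|\ge 4\pi^2k^2-M$, which grows like $k^2$. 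Then, exactly as in Lemma~\ref{l4}(a), I would bound $R_{2m+1}(\lambda_0)$ (given by~\eqref{eq7}) by isolating its largest summands in absolute value, using $|q_k|\le (b-a)/(\pi|k|)$ (or simply $|q_k|\le b-a$ together with the denominators) and the separation bounds just obtained. The resulting bound should be of the form $|R_{2m+1}(\lambda_0)|<d\,r^m$ for some constant $d>0$ and some $0<r<1$, where the condition $M\le 4\pi^2/3$ is exactly what is needed to make the geometric ratio $r$ strictly less than $1$ (the ratios that appear are governed by $4^2/(8\pi^2/3-\cdots)$-type quantities, so the constant $4\pi^2/3$ is sharp for this crude bound). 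Letting $m\to\infty$ then gives $\lim_{m\to\infty}R_m(\lambda_0)=0$; the companion quantity $R_m^{\ast}(\lambda_0)$ is handled the same way.

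For part \textbf{(b)}, I would argue by contradiction as in Lemma~\ref{l4}(b). Suppose $(\Psi_0,1)=0$. Since $\{e^{2\pi ikx}:k\in\mathbb{Z}\}$ is an orthonormal basis of $L_2[0,1]$ and $\Psi_0$ is normalized, Parseval gives $\sum_{k\neq 0}|(\Psi_0,e^{i2\pi kx})|^2=1$. On the other hand, from $(\lambda_0-(2\pi k)^2)(\Psi_0,e^{i2\pi kx})=(q\Psi_0,e^{i2\pi kx})$ together with the separation estimate $|\lambda_0-(2\pi k)^2|\ge 4\pi^2-M$ for $k\neq 0$ and Bessel's inequality $\sum_k|(q\Psi_0,e^{i2\pi kx})|^2\le\|q\Psi_0\|^2\le M^2$, we obtain
\[
\sum_{k\neq 0}|(\Psi_0,e^{i2\pi kx})|^2=\sum_{k\neq 0}\frac{|(q\Psi_0,e^{i2\pi kx})|^2}{|\lambda_0-(2\pi k)^2|^2}\le\frac{M^2}{(4\pi^2-M)^2}.
\]
Under $M\le 4\pi^2/3$ one has $M<4\pi^2-M$, so the right-hand side is $<1$, contradicting $\sum_{k\neq 0}|(\Psi_0,e^{i2\pi kx})|^2=1$. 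This proves $|(\Psi_0,1)|>0$.

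I do not expect a serious obstacle here, since the argument is genuinely parallel to Lemma~\ref{l4}; the only point requiring care is the bookkeeping in part \textbf{(a)} — one must correctly identify which terms of $R_{2m+1}(\lambda_0)$ are the dominant ones (those whose product of denominators stays closest to the small value $|\lambda_0|$ or, here, to the smallest gap $4\pi^2-M$) and then verify that, for $M\le 4\pi^2/3$, every geometric ratio that arises is strictly below $1$. This is the same kind of explicit estimate carried out for $n=1,2$ in Lemma~\ref{l4}(a), so it is routine but slightly tedious; the hypothesis $M\le 4\pi^2/3$ is precisely the threshold that makes it work. (I note in passing that the phrase ``similar to the proof of Lemma~\ref{l1}'' in the statement should read ``Lemma~\ref{l4}'': it is Lemma~\ref{l4}, not Lemma~\ref{l1}, whose argument is being reused.)
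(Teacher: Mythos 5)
Your proposal is correct and follows exactly the approach the paper intends: it is a direct adaptation of Lemma~\ref{l4}'s two-part argument (geometric-series bound on $R_{2m+1}$ using the separation estimate for part (a); contradiction via Parseval, Bessel, and the same separation estimate for part (b)) to the $n=0$ case. You also correctly flag that the paper's reference to ``Lemma~\ref{l1}'' is a typo for Lemma~\ref{l4}.
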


Before stating the main results, we write the following relations:
\begin{equation}
a_{1}((2\pi n)^{2})=\frac{-ab}{16\pi^{2}n^{2}}+\frac{(b^{2}-a^{2})\sin(4\pi
nc)}{64\pi^{3}n^{3}}+\frac{3(b-a)^{2}(\cos(4\pi nc)-1)}{128\pi^{4}n^{4}}
\label{eq50}%
\end{equation}
and
\begin{align}
&  q_{2n}+b_{1}((2\pi n)^{2})=q_{2n}+2Q_{0}Q_{2n}-S_{2n}\nonumber\\
&  \qquad=\frac{(a-b)(1-e^{-i4\pi nc})}{4\pi ni}+\frac{ab(1+e^{-i4\pi nc}%
)}{16\pi^{2}n^{2}}+\frac{(a^{2}-b^{2})(1-e^{-i4\pi nc})}{32\pi^{3}n^{3}i},
\label{eq51}%
\end{align}
which follow from~\eqref{eq18},~\eqref{eq39},~\eqref{eq33}
and~\eqref{eq9},~\eqref{eq34}-\eqref{eq36},~\eqref{eq33}, respectively, by
direct calculations.

Now we state the following interesting remark:

\begin{remark}
\label{r1} We have the formulas $q_{-k}=e^{i2\pi kc}q_{k}$ and $q_{-k}%
=\overline{q_{k}}$. These give, respectively,
\[
\arg q_{-k}=\arg q_{k}+2\pi kc\text{ and }\arg q_{-k}=-\arg q_{k}%
\]
and hence $\arg q_{k}=-\pi kc$, $q_{k}=e^{-i\pi kc}\left\vert q_{k}\right\vert
$, from which we obtain
\[
q_{2n}=e^{-i2\pi nc}\left\vert q_{2n}\right\vert
\]
and
\[
q_{n_{1}}q_{n_{2}}...q_{n_{k}}q_{2n-n_{1}-n_{2}-...-n_{k}}=\left\vert
q_{n_{1}}q_{n_{2}}...q_{n_{k}}q_{2n-n_{1}-n_{2}-...-n_{k}}\right\vert
e^{-i2\pi nc}.
\]
It means that $e^{i2\pi nc}\biggl(q_{2n}+\sum\limits_{k=1}^{\infty}%
b_{k}(\lambda)\biggr)$ is a real number.
\end{remark}

Using this remark and letting $m$ tend to infinity in equation~\eqref{eq4}, we
obtain the following main results. First, we consider the case $n\geq1$:

\begin{theorem}
\label{t7} \textbf{(a)} If $M\leq\dfrac{4\pi^{2}(2n-1)}{3}$, then
$\lambda_{n,j}$ is an eigenvalue of $L_{0}(q)$ if and only if it is either the
root of the equation
\begin{equation}
\lambda-(2\pi n)^{2}-\sum\limits_{k=1}^{\infty}a_{k}(\lambda)-e^{i2\pi
nc}\biggl(q_{2n}+\sum\limits_{k=1}^{\infty}b_{k}(\lambda)\biggr)=0
\label{eq52}%
\end{equation}
or the root of
\begin{equation}
\lambda-(2\pi n)^{2}-\sum\limits_{k=1}^{\infty}a_{k}(\lambda)+e^{i2\pi
nc}\biggl(q_{2n}+\sum\limits_{k=1}^{\infty}b_{k}(\lambda)\biggr)=0
\label{eq53}%
\end{equation}
in the set $D_{n}:=[(2\pi n)^{2}-M,(2\pi n)^{2}+M]$, where $n=1,2,\ldots$.
Moreover, the roots of~\eqref{eq52} and~\eqref{eq53} in $D_{n}$, coincide with
the $(2n)$th and $(2n+1)$st periodic eigenvalues $\lambda_{n,1}$ and
$\lambda_{n,2}$.
\end{theorem}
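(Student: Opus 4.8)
The plan is to run the iteration formula \eqref{eq4} with $m\to\infty$ and show that the resulting infinite series converge and encode the eigenvalue equation. First I would recall that for $\lambda_{n,j}\in D_n$ the estimate \eqref{eq46} gives a uniform lower bound $|\lambda_{n,j}-(2\pi k)^2|\geq c>0$ for all $k\neq\pm n$, valid precisely under the hypothesis $M\leq\frac{4\pi^2(2n-1)}{3}$ (with the separate cases $n=1$ and $n\geq 2$ handled as in the preamble). Combined with the estimates in Lemma~\ref{l4}(a) — that $R_m(\lambda_{n,j})\to 0$ and likewise $R_m^*(\lambda_{n,j})\to 0$ — and the majorant-series bounds $|R_{2m+1}(\lambda_{n,j})|<dr^m$ with $0<r<1$, the tail terms in \eqref{eq5}–\eqref{eq7} are dominated by geometric series, so $A_m(\lambda_{n,j})$ and $B_m(\lambda_{n,j})$ converge to $\sum_{k\geq 1}a_k(\lambda_{n,j})$ and $\sum_{k\geq 1}b_k(\lambda_{n,j})$, and $R_m(\lambda_{n,j})\to 0$.

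Passing to the limit $m\to\infty$ in \eqref{eq4} then yields
\begin{equation}
\Bigl(\lambda_{n,j}-(2\pi n)^2-\sum_{k=1}^{\infty}a_k(\lambda_{n,j})\Bigr)u_{n,j}
-\Bigl(q_{2n}+\sum_{k=1}^{\infty}b_k(\lambda_{n,j})\Bigr)v_{n,j}=0,
\nonumber
\end{equation}
and, by the symmetric version of \eqref{eq4} (the one built around $e^{-i2\pi nx}$, using $R_m^*$), the companion relation with $u_{n,j}$ and $v_{n,j}$ interchanged and $q_{2n}$ replaced by $q_{-2n}=\overline{q_{2n}}$. By \eqref{eq48} we have $v_{n,j}=\overline{u_{n,j}}$, and by Lemma~\ref{l4}(b) we know $|u_{n,j}|>0$, so we may divide the first relation by $u_{n,j}$. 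Writing $u_{n,j}=|u_{n,j}|e^{i\varphi}$ and using Remark~\ref{r1}, which says $e^{i2\pi nc}\bigl(q_{2n}+\sum_k b_k(\lambda)\bigr)$ is real, the ratio $v_{n,j}/u_{n,j}=e^{-2i\varphi}$ must equal a real unimodular number after multiplying through by $e^{i2\pi nc}$; hence $e^{-2i\varphi}e^{i2\pi nc}\cdot e^{-i2\pi nc}=\pm 1$, forcing $v_{n,j}/u_{n,j}=\pm e^{-i2\pi nc}$ — wait, more carefully: rearranging gives $\lambda_{n,j}-(2\pi n)^2-\sum a_k(\lambda_{n,j})=\bigl(q_{2n}+\sum b_k(\lambda_{n,j})\bigr)\overline{u_{n,j}}/u_{n,j}$; the left side is real, $e^{i2\pi nc}(q_{2n}+\sum b_k)$ is real by Remark~\ref{r1}, so $e^{-i2\pi nc}\overline{u_{n,j}}/u_{n,j}$ is real of modulus $1$, i.e. equals $\pm 1$, giving exactly \eqref{eq52} (the $+1$ case, after multiplying by $e^{i2\pi nc}$... let me match signs) or \eqref{eq53}. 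Thus every eigenvalue $\lambda_{n,j}\in D_n$ satisfies \eqref{eq52} or \eqref{eq53}.

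For the converse and the counting statement, I would argue that the two functions $F_\pm(\lambda):=\lambda-(2\pi n)^2-\sum_k a_k(\lambda)\mp e^{i2\pi nc}(q_{2n}+\sum_k b_k(\lambda))$ are real-analytic (or at least continuous, with controlled derivative) on $D_n$, that the perturbation terms are small relative to the length $2M$ of $D_n$ under our hypothesis on $M$, so each $F_\pm$ has exactly one sign change and hence exactly one root in $D_n$; since the total number of periodic eigenvalues in $D_n$ is known to be two (counting $\lambda_{n,1}\leq\lambda_{n,2}$, which are the $(2n)$th and $(2n+1)$st eigenvalues by the standard enumeration recalled in the introduction), these two roots must be exactly $\lambda_{n,1}$ and $\lambda_{n,2}$, and the ordering of the roots of \eqref{eq52} versus \eqref{eq53} is pinned down by comparing with the first-order asymptotics $\lambda_{n,j}=(2\pi n)^2+(-1)^j|q_{2n}|+\cdots$ from \eqref{eq13}. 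The main obstacle I anticipate is the convergence/analyticity bookkeeping: one must show the series $\sum_k a_k(\lambda)$ and $\sum_k b_k(\lambda)$ converge uniformly on $D_n$ (not merely that the partial sums appearing in \eqref{eq4} converge along the eigenvalues), which again reduces to the geometric majorants already established in Lemma~\ref{l4}(a), but needs to be stated for all $\lambda\in D_n$, not just $\lambda=\lambda_{n,j}$; and then the one-root-per-equation argument requires that $|F_\pm'(\lambda)-1|<1$ on $D_n$, i.e. that the derivative of the correction series is $<1$, which is where the explicit constant $\frac{4\pi^2(2n-1)}{3}$ in the hypothesis is used.
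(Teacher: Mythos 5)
Your proposal follows the paper's own route quite closely. The forward direction — pass to the limit $m\to\infty$ in \eqref{eq4} using Lemma~\ref{l4}, replace $v_{n,j}$ by $\overline{u_{n,j}}$, divide by $u_{n,j}$ (nonzero by Lemma~\ref{l4}(b)), and invoke the reality of $\lambda-(2\pi n)^2-\sum_k a_k(\lambda)$ together with Remark~\ref{r1} to force $e^{-i2\pi nc}\,\overline{u_{n,j}}/u_{n,j}=\pm 1$ — is exactly the paper's argument leading to \eqref{eq54} and the dichotomy \eqref{eq52}/\eqref{eq53}. The only substantive deviation is in the root-counting step: the paper defines the linear comparison function $F_{n,j}(\lambda)=\lambda-(2\pi n)^2-a_1((2\pi n)^2)+(-1)^je^{i2\pi nc}(q_{2n}+b_1((2\pi n)^2))$, estimates $|G_{n,j}-F_{n,j}|<|F_{n,j}|$ on the boundary of $D_n$, and applies Rouch\'e's theorem, whereas you propose to show $|F_\pm'-1|<1$ on $D_n$ so that each $F_\pm$ is strictly monotone with a sign change and hence has exactly one root by the intermediate value theorem. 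These are interchangeable techniques resting on the same quantitative bounds; in fact the derivative estimate you want is exactly the $|g_{n,j}'(\lambda)|<C_n<1$ bound the paper proves later in Theorem~\ref{t9}. You also correctly flag the one genuine bookkeeping point — that the series $\sum_k a_k(\lambda)$, $\sum_k b_k(\lambda)$ must be controlled uniformly on $D_n$, not merely at $\lambda=\lambda_{n,j}$ — which the paper handles with explicit term-by-term estimates on $|a_1(\lambda)-a_1((2\pi n)^2)|$, $|a_k(\lambda)|$, etc., on the boundary of $D_n$. So the proposal is sound and, modulo Rouch\'e versus monotonicity, essentially the paper's proof.
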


\begin{proof}
\textbf{(a)} By Lemma~\ref{l4}, letting $m$ tend to infinity in
equation~\eqref{eq4}, we obtain
\begin{equation}
\biggl(\lambda_{n,j}-(2\pi n)^{2}-\sum\limits_{k=1}^{\infty}a_{k}%
(\lambda_{n,j})\biggr)u_{n,j}=\biggl(q_{2n}+\sum\limits_{k=1}^{\infty}%
b_{k}(\lambda_{n,j})\biggr)\overline{u_{n,j}}, \label{eq54}%
\end{equation}
where $\biggl|\dfrac{u_{n,j}}{\overline{u_{n,j}}}\biggr|=1$. Therefore, we
have
\[
\biggl|\lambda_{n,j}-(2\pi n)^{2}-\sum\limits_{k=1}^{\infty}a_{k}%
(\lambda_{n,j})\biggr|=\biggl|q_{2n}+\sum\limits_{k=1}^{\infty}b_{k}%
(\lambda_{n,j})\biggr|.
\]
Now taking into account that
\[
\lambda_{n,j}-(2\pi n)^{2}-\sum\limits_{k=1}^{\infty}a_{k}(\lambda_{n,j})
\]
and
\[
e^{i2\pi nc}\biggl(q_{2n}+\sum\limits_{k=1}^{\infty}b_{k}(\lambda)\biggr)
\]
are real numbers (see Lemma~2.1.1~(b) of~\cite{17} and Remark~\ref{r1}), we
obtain that $\lambda_{n,j}$ is either the root of equation~\eqref{eq52} or the
root of~\eqref{eq53}.

Now, we prove that the roots of~\eqref{eq52} and~\eqref{eq53} lying in the
interval $D_{n}$ are the eigenvalues of $L_{0}$. To do this, we first give the
following relations:
\[
a_{1}(\lambda)=a_{1}((2\pi n)^{2})-\sum_{\substack{n_{1}=-\infty\\n_{1}%
\neq0,2n}}^{\infty}\frac{|q_{n_{1}}|^{2}[\lambda-(2\pi n)^{2}]}{4\pi^{2}%
n_{1}(2n-n_{1})[\lambda-(2\pi(n-n_{1}))^{2}]}%
\]
and
\[
b_{1}(\lambda)=b_{1}((2\pi n)^{2})-\sum_{\substack{n_{1}=-\infty\\n_{1}%
\neq0,2n}}^{\infty}\frac{q_{n_{1}}q_{2n-n_{1}}[\lambda-(2\pi n)^{2}]}{4\pi
^{2}n_{1}(2n-n_{1})[\lambda-(2\pi(n-n_{1}))^{2}]}.
\]

The equation ??(function)
\[
F_{n,j}(\lambda):=\lambda-(2\pi n)^{2}-a_{1}((2\pi n)^{2})+(-1)^{j}e^{i2\pi
nc}\bigl(q_{2n}+b_{1}((2\pi n)^{2})\bigr)
\]
has one root, for each $j$, in the interval $D_{n}=[(2\pi n)^{2}-M,(2\pi
n)^{2}+M]$ and
\[
|F_{n,j}(\lambda)|\geq|\lambda-(2\pi n)^{2}|-|a_{1}((2\pi n)^{2}%
)|-\bigl|q_{2n}+b_{1}((2\pi n)^{2})\bigr|.
\]
By Remark~\ref{r1} and~\eqref{eq51}, we write
\begin{align}
&  e^{i2\pi nc}\bigl(q_{2n}+b_{1}((2\pi n)^{2})\bigr)=e^{i2\pi nc}%
(q_{2n}+2Q_{0}Q_{2n}-S_{2n})\nonumber\\
&  \qquad=\frac{(a-b)\sin(2\pi nc)}{2\pi n}+\frac{ab\cos(2\pi nc)}{8\pi
^{2}n^{2}}+\frac{(a^{2}-b^{2})\sin(2\pi nc)}{16\pi^{3}n^{3}}. \label{eq55}%
\end{align}
Estimating $|a_{1}((2\pi n)^{2})|$\ and $|q_{2n}+b_{1}((2\pi n)^{2})|$
by~\eqref{eq50} and~\eqref{eq55}, we have\
\[
|F_{n,j}(\lambda)|>\frac{9\pi^{2}}{20},
\]
for all $\lambda$ from the boundary of $D_{n}$, for $n=1$. Now we define
\begin{equation}
G_{n,j}(\lambda):=\lambda-(2\pi n)^{2}-\sum\limits_{k=1}^{\infty}a_{k}%
(\lambda)+(-1)^{j}e^{i2\pi nc}\biggl(q_{2n}+\sum\limits_{k=1}^{\infty}%
b_{k}(\lambda)\biggr), \label{eq56}%
\end{equation}
for $j=1,2$. Estimating the summands of $|a_{1}(\lambda)-a_{1}((2\pi n)^{2}%
)|$, $|a_{k}(\lambda)|$, $|b_{1}(\lambda)-b_{1}((2\pi n)^{2})|$\ and
$|b_{k}(\lambda)|$ by considering the greatest summands of them,\ for $n=1$,
we obtain
\begin{align*}
|a_{1}(\lambda)-a_{1}((2\pi n)^{2})|  &  <1,\qquad|a_{k}(\lambda)|<\frac
{2}{\pi^{k-1}},\\
|b_{1}(\lambda)-b_{1}((2\pi n)^{2})|  &  <1,\qquad|b_{k}(\lambda)|<\frac
{2}{\pi^{k-1}},
\end{align*}
for $k\geq2$. Therefore, it follows by the geometric series formula that
\begin{align*}
|a_{1}(\lambda)-a_{1}((2\pi n)^{2})|+\sum\limits_{k=2}^{\infty}|a_{k}%
(\lambda)|  &  <\frac{\pi+1}{\pi-1}\\
|b_{1}(\lambda)-b_{1}((2\pi n)^{2})|+\sum\limits_{k=2}^{\infty}|b_{k}%
(\lambda)|  &  <\frac{\pi+1}{\pi-1},
\end{align*}
for $n=1$. Hence by~\eqref{eq56}, we obtain
\begin{align*}
&  |G_{n,j}-F_{n,j}|=\biggl|a_{1}(\lambda)-a_{1}((2\pi n)^{2})+\sum
\limits_{k=2}^{\infty}a_{k}(\lambda)+(-1)^{j}e^{i2\pi nc}\biggl(b_{1}%
(\lambda)-b_{1}((2\pi n)^{2})+\sum\limits_{k=2}^{\infty}b_{k}(\lambda
)\biggr)\biggr|\\
&  \qquad\leq|a_{1}(\lambda)-a_{1}((2\pi n)^{2})|+\sum\limits_{k=2}^{\infty
}|a_{k}(\lambda)|+|b_{1}(\lambda)-b_{1}((2\pi n)^{2})|+\sum\limits_{k=2}%
^{\infty}|b_{k}(\lambda)|\\
&  \qquad<\frac{2(\pi+1)}{\pi-1},
\end{align*}
for all $\lambda$ from the boundary of $D_{n}$, for $n=1$. Similar estimations
can be obtained also for $n\geq2$. Therefore $|G_{n,j}(\lambda)-F_{n,j}%
(\lambda)|<|F_{n,j}(\lambda)|$ holds for all $\lambda$ from the boundary of
$D_{n}$ and by Rouche's theorem, $G_{n,j}(\lambda)$ has one root in the set
$D_{n}$, for $j=1$ and $j=2$. Hence, $L_{0}$ has one eigenvalue (counting
multiplicity) in $D_{n}$, which is the root of~\eqref{eq52} and it has one
eigenvalue (counting multiplicity) in $D_{n}$, which is the root
of~\eqref{eq53}. On the other hand, each of the equations~\eqref{eq52}
and~\eqref{eq53} has exactly one root (counting multiplicity) in $D_{n}$.
Thus, $\lambda\in D_{n}$ is an eigenvalue of $L_{0}$ if and only if, it is
either the root of~\eqref{eq52} or the root of~\eqref{eq53} and the roots
of~\eqref{eq52} and~\eqref{eq53} coincide with the eigenvalues $\lambda_{n,1}%
$\ and $\lambda_{n,2}$\ of $L_{0}$.
\end{proof}

Now, we consider the case $n=0$.

\begin{theorem}
\label{t8} If $M\leq4\pi^{2}/3$, then the first periodic eigenvalue
$\lambda_{0}$ is the root of the equation
\begin{equation}
\lambda-\sum\limits_{k=1}^{\infty}a_{k}(\lambda)=0, \label{eq57}%
\end{equation}
in the set $D_{0}=[-M,M]$. Moreover,~\eqref{eq57} has exactly one\ root
(counting multiplicity) in the set $D_{0}$ and this root coincides with the
first eigenvalue $\lambda_{0}$ of $L_{0}$.

\begin{proof}
\textbf{(a)} Iterating the equation $\lambda_{N}(\Psi_{N},1)=(q\Psi_{N},1)$
$m$ times, for $N=0$, by isolating the terms containing $(\Psi_{0},1)$ gives
\[
\biggl(\lambda_{0}-\sum\limits_{k=1}^{m}a_{k}(\lambda_{0})\biggr)(\Psi
_{0},1)=R_{m}(\lambda_{0}),
\]
where $a_{k}$ and $R_{m}$\ are defined by~\eqref{eq5}and~\eqref{eq7}.\ Letting
$m$ tend to infinity in the last equation, by Lemma~\ref{l5}, we
obtain~\eqref{eq57}. Let $F_{0}(\lambda):=\lambda-a_{1}(0)$ and
\[
G_{0}(\lambda):=\lambda-\sum\limits_{k=1}^{\infty}a_{1}(\lambda).
\]
Then, using the relations $Q_{0}=ac/2$,
\begin{align*}
a_{1}(0)  &  =\sum_{\substack{k=-\infty\\k\neq0}}^{\infty}\frac{|q_{k}|^{2}%
}{-(2\pi(-k))^{2}}=-\sum_{\substack{k=-\infty\\k\neq0}}^{\infty}\frac
{|q_{k}|^{2}}{4\pi^{2}k^{2}}=Q_{0}^{2}-\sum_{\substack{k=-\infty\\k\neq
0}}^{\infty}Q_{k}Q_{-k}\\
&  =Q_{0}^{2}-\int_{0}^{1}\biggl(\sum_{k=-\infty}^{\infty}Q_{k}e^{i2\pi
kx}\biggr)^{2}dx=Q_{0}^{2}-\int_{0}^{1}Q^{2}(x)dx=Q_{0}^{2}-S_{0},
\end{align*}
and
\[
S_{0}=\int_{0}^{1}S(x)dx=\int_{0}^{1}Q^{2}(x)dx=\int_{0}^{c}a^{2}x^{2}%
dx+\int_{c}^{1}(bx-b)^{2}dx=\frac{a^{2}c^{2}}{3},
\]
by direct calculation we obtain
\[
-a_{1}(0)=S_{0}-Q_{0}^{2}=\frac{a^{2}c^{2}}{3}-\frac{a^{2}c^{2}}{4}%
=\frac{a^{2}c^{2}}{12}.
\]
If $c\in(0,1/2)$ and $M<4\pi^{2}/3$, then $|ac|\leq(1/2)M<2\pi^{2}/3$ since in
this case $b<|a|$ and $|a|=M$. If $c\in(1/2,1)$, then $|ac|=(1-c)b\leq
(1-1/2)M<2\pi^{2}/3$. In this case $b>|a|$ and $b=M$. Therefore, in any case
$|ac|<2\pi^{2}/3$ if $M<4\pi^{2}/3$ and we have $A_{1,0}(0)<\pi^{4}/27$. It is
easy to verify that the inequality $|G_{0}(\lambda)-F_{0}(\lambda
)|<|F_{0}(\lambda)|$ holds for all $\lambda$ from the boundary of $D_{0}$ and
by Rouche's theorem, $G_{0}(\lambda)$ has one root in the set $D_{0}$. Hence,
$L_{0}$ has one eigenvalue (counting multiplicity) in $D_{0}$, which is the
root of~\eqref{eq57}. On the other hand, equation~\eqref{eq57} has exactly one
root (counting multiplicity) in $D_{0}$. Thus, $\lambda\in D_{0}$ is an
eigenvalue of $L_{0}$ if and only if, it is the root of~\eqref{eq57} and the
root of~\eqref{eq57}\ in $D_{0}$ coincides with the first eigenvalue
$\lambda_{0}$\ of $L_{0}$.
\end{proof}
\end{theorem}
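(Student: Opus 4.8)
The plan is to follow the strategy of Theorem~\ref{t7} specialized to $n=0$, where it simplifies: the two projections $(\Psi_0,e^{i2\pi\cdot0\cdot x})$ and $(\Psi_0,e^{-i2\pi\cdot0\cdot x})$ both equal $(\Psi_0,1)$, so no $b_k$-terms occur. First I would start from $\lambda_0(\Psi_0,1)=(q\Psi_0,1)$, iterate it $m$ times exactly as in the derivation of~\eqref{eq4}, and collect the coefficient of $(\Psi_0,1)$, obtaining
\[
\Bigl(\lambda_0-\sum_{k=1}^{m}a_k(\lambda_0)\Bigr)(\Psi_0,1)=R_m(\lambda_0),
\]
with $a_k,R_m$ as in~\eqref{eq5},~\eqref{eq7} for $n=0$. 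By Lemma~\ref{l5}, $R_m(\lambda_0)\to0$ and $(\Psi_0,1)\neq0$; dividing by $(\Psi_0,1)$ and letting $m\to\infty$ shows $\lambda_0$ satisfies~\eqref{eq57}, and the a priori bound $|\lambda_0|\le M$ places it in $D_0$. This already establishes the first assertion.

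For the rest I would show that~\eqref{eq57} has exactly one root (with multiplicity) in $D_0$ by Rouch\'e's theorem on the circle $|\lambda|=M$, comparing $G_0(\lambda):=\lambda-\sum_{k\ge1}a_k(\lambda)$ with the model $F_0(\lambda):=\lambda-a_1(0)$; analyticity of the $a_k$ near $D_0$ holds because $M\le4\pi^2/3$ keeps every denominator $\lambda-(2\pi m)^2$, $m\neq0$, bounded away from $0$. The one genuinely computational ingredient is $a_1(0)$: writing $Q_k=q_k/(2\pi ki)$ for $k\neq0$ and using Parseval gives $a_1(0)=Q_0^2-S_0$, and for the potential~\eqref{eq32} with~\eqref{eq33} the direct evaluations $Q_0=ac/2$, $S_0=\int_0^1 Q^2=a^2c^2/3$ yield $a_1(0)=-a^2c^2/12$. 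A short case split on $c\in(0,\tfrac12]$ versus $c\in[\tfrac12,1)$, using~\eqref{eq33} and $M=\max\{|a|,b\}$, gives $|ac|\le M/2$, hence $|a_1(0)|\le M^2/48<M$, so $F_0$ has its single zero $a_1(0)$ in the open disc $|\lambda|<M$.

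It remains to verify $|G_0(\lambda)-F_0(\lambda)|<|F_0(\lambda)|$ on $|\lambda|=M$. There $|F_0(\lambda)|\ge M-|a_1(0)|\ge M-M^2/48$, while
\[
|G_0(\lambda)-F_0(\lambda)|\le|a_1(\lambda)-a_1(0)|+\sum_{k\ge2}|a_k(\lambda)|.
\]
For $|\lambda|\le M\le4\pi^2/3$ one has $|\lambda-4\pi^2m^2|\ge\tfrac23\cdot4\pi^2m^2$ for $m\neq0$ and $|q_m|\le(b-a)/(\pi|m|)\le2M/(\pi|m|)$, so $a_1(\lambda)-a_1(0)=\sum_{m\neq0}|q_m|^2\lambda\,\bigl[(\lambda-4\pi^2m^2)4\pi^2m^2\bigr]^{-1}$ is $O(M^3)$ with a small constant, and $\sum_{k\ge2}|a_k(\lambda)|$ is dominated by a geometric series, each additional $q$-factor contributing a bound strictly below $1$, exactly in the spirit of the estimate of $R_{2m+1}$ in Lemma~\ref{l4}. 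Pinning down these constants and checking they stay under $M-M^2/48$ is the main obstacle, but it is routine, if technical, bookkeeping. Granting it, $G_0$ has no zero on $|\lambda|=M$ and exactly one in $|\lambda|<M$; since $\lambda_0$ is a zero of $G_0$ with $|\lambda_0|\le M$, it is that unique zero, so~\eqref{eq57} has precisely one root in $D_0$, namely $\lambda_0$ — which is the remaining assertion.
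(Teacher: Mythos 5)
Your proposal mirrors the paper's own proof essentially step for step: same iteration starting from $\lambda_0(\Psi_0,1)=(q\Psi_0,1)$, same invocation of Lemma~\ref{l5} to pass to the limit, same computation $a_1(0)=Q_0^2-S_0=-a^2c^2/12$ via $Q_0=ac/2$ and $S_0=a^2c^2/3$, the same case split on $c$ to bound $|ac|$, and the same Rouch\'e comparison of $G_0$ with $F_0(\lambda)=\lambda-a_1(0)$. The only cosmetic difference is that you state the Rouch\'e contour explicitly as the circle $|\lambda|=M$ (the paper writes ``boundary of $D_0=[-M,M]$'', implicitly meaning the disc) and you phrase the bound on $a_1(0)$ as $M^2/48$ rather than $\pi^4/27$, which is numerically the same under $M\le 4\pi^2/3$; both of you defer the final numerical verification of $|G_0-F_0|<|F_0|$ as routine.
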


Now, we can use numerical methods by taking finite sums instead of the
infinite series in~\eqref{eq52},~\eqref{eq53} and~\eqref{eq57}. In this case,
we write
\[
\lambda-(2\pi n)^{2}-\sum\limits_{k=1}^{r}a_{s,k,n}(\lambda)+(-1)^{j}e^{i2\pi
nc}\biggl(q_{2n}+\sum\limits_{k=1}^{r}b_{s,k,n}(\lambda)\biggr)=0,
\]
for $j=1$ and $j=2$, and\
\[
\lambda-\sum\limits_{k=1}^{r}a_{s,k,0}(\lambda)=0,
\]
respectively, where
\begin{align*}
a_{s,k,n}(\lambda)  &  =\sum_{n_{1},n_{2},...,n_{k}=-s}^{s}\frac{q_{n_{1}%
}q_{n_{2}}\cdots q_{n_{k}}q_{-n_{1}-n_{2}-\cdots-n_{k}}}{[\lambda
-(2\pi(n-n_{1}))^{2}]\cdots\lbrack\lambda-(2\pi(n-n_{1}-\cdots-n_{k}))^{2}%
]},\\
b_{s,k,n}(\lambda)  &  =\sum_{n_{1},n_{2},...,n_{k}=-s}^{s}\frac{q_{n_{1}%
}q_{n_{2}}\cdots q_{n_{k}}q_{2n-n_{1}-n_{2}-\cdots-n_{k}}}{[\lambda
-(2\pi(n-n_{1}))^{2}]\cdots\lbrack\lambda-(2\pi(n-n_{1}-\cdots-n_{k}))^{2}]}.
\end{align*}
We define the functions
\begin{equation}
K_{n,j}(\lambda):=\lambda-(2\pi n)^{2}-g_{n,j}(\lambda) \label{eq58}%
\end{equation}
and
\begin{equation}
K_{0}(\lambda):=\lambda-g_{0}(\lambda), \label{eq59}%
\end{equation}
where
\begin{equation}
g_{n,j}(\lambda)=\sum\limits_{k=1}^{r}a_{s,k,n}(\lambda)-(-1)^{j}e^{i2\pi
nc}\biggl(q_{2n}+\sum\limits_{k=1}^{r}b_{s,k,n}(\lambda)\biggr) \label{eq60}%
\end{equation}
and
\begin{equation}
g_{0}(\lambda)=\sum\limits_{k=1}^{r}a_{s,k,0}(\lambda). \label{eq61}%
\end{equation}
Then,
\begin{equation}
\lambda=(2\pi n)^{2}+g_{n,j}(\lambda), \label{eq62}%
\end{equation}
for $j=1$ and $j=2$, and $n\geq1$.

Now we state another main result.

\begin{theorem}
\label{t9} If $M\leq\dfrac{4\pi^{2}(2n-1)}{3}$,\emph{ }then for all $x$ and
$y$ from the interval $D_{n}=[(2\pi n)^{2}-M,(2\pi n)^{2}+M]$, the relations
\begin{gather}
|g_{n,j}(x)-g_{n,j}(y)|\leq C_{n}|x-y|,\label{eq63}\\
C_{n}=\frac{4(b-a)^{2}}{\pi(4\pi^{2}(2n-1)-M)[\pi(4\pi^{2}(2n-1)-M)-(b-a)]}%
\leq\frac{4}{\pi(\pi-1)}<1,\nonumber
\end{gather}
hold for $j=1,2$, and equation~\eqref{eq62} has a unique solution $\rho_{n,j}$
in $D_{n}$, for each $j$, where $n=1,2,\ldots$. Moreover
\begin{align}
&  |\lambda_{n,j}-\rho_{n,j}|<\frac{3(b-a)^{r+2}}{2\pi^{r+1}(4\pi
^{2}(2n-1)-M)^{r}[\pi(4\pi^{2}(2n-1)-M)-(b-a)](1-C_{n})}\nonumber\\
&  \qquad+\frac{6(b-a)^{2}}{\pi^{2}(s+1)^{2}[4\pi^{2}(s+1)|s+1-2n|-M](1-C_{n}%
)}, \label{eq64}%
\end{align}
for $j=1,2$.
\end{theorem}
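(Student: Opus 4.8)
The plan is to prove Theorem~\ref{t9} in three stages: first the Lipschitz bound \eqref{eq63}, then existence and uniqueness of $\rho_{n,j}$ via the Banach fixed point theorem, and finally the error estimate \eqref{eq64} comparing $\rho_{n,j}$ with the true eigenvalue $\lambda_{n,j}$.

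\textbf{Step 1: the Lipschitz estimate.} I would differentiate (or difference) the finite sums $a_{s,k,n}(\lambda)$ and $b_{s,k,n}(\lambda)$ term by term. Each summand is a product of Fourier coefficients times $\prod_{s=1}^{k}[\lambda-(2\pi(n-n_{1}-\cdots-n_{s}))^{2}]^{-1}$, and for $\lambda\in D_{n}$ and every admissible index tuple, each factor in the denominator is bounded below in modulus by $4\pi^{2}(2n-1)-M$ by the inequality \eqref{eq46}. Writing the difference of two such products as a telescoping sum and using $|q_{m}|\le (b-a)/(\pi|m|)\le (b-a)/\pi$ together with the crude bound $\sum_{m\ne 0}|q_m|^2<\infty$ (in fact $\le$ a constant times $(b-a)^2$, since $q$ is a step function), one sees the derivative of each $a_{s,k,n}$ and $b_{s,k,n}$ is dominated by a geometric factor $((b-a)/[\pi(4\pi^2(2n-1)-M)])^{k}$ up to a polynomial in $k$; summing the geometric series in $k$ yields exactly the constant $C_n$ displayed in \eqref{eq63}. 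The final inequality $C_n\le 4/(\pi(\pi-1))<1$ follows by plugging in the worst case $M=4\pi^2(2n-1)/3$, which makes $4\pi^2(2n-1)-M=\tfrac{2}{3}\cdot 4\pi^2(2n-1)\ge\tfrac{8\pi^2}{3}$, and using $b-a\le 2M\le$ a bounded quantity; this is a routine but slightly delicate numerical verification.

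\textbf{Step 2: existence and uniqueness.} Once $|g_{n,j}(x)-g_{n,j}(y)|\le C_n|x-y|$ with $C_n<1$ on $D_n$, the map $x\mapsto (2\pi n)^2+g_{n,j}(x)$ is a contraction of $D_n$; I must check it sends $D_n$ into itself, which follows because $|g_{n,j}(x)|\le M$ on $D_n$ — the same term-by-term geometric estimate bounds $|g_{n,j}|$ by a quantity smaller than $M$ under the hypothesis $M\le 4\pi^2(2n-1)/3$ (and in fact this is consistent with the bound $|\lambda_{n,j}-(2\pi n)^2|\le M$ from~\cite{6}). The Banach fixed point theorem then gives a unique solution $\rho_{n,j}\in D_n$ of \eqref{eq62}.

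\textbf{Step 3: the error bound.} Here I compare the truncated equation solved by $\rho_{n,j}$ with the exact equation solved by $\lambda_{n,j}$, namely \eqref{eq52}--\eqref{eq53} from Theorem~\ref{t7} (applicable since $M\le 4\pi^2(2n-1)/3$). Write the exact fixed point relation $\lambda_{n,j}=(2\pi n)^2+G_{n,j}^{\mathrm{ex}}(\lambda_{n,j})$ where $G_{n,j}^{\mathrm{ex}}$ uses the full series $\sum_{k\ge1}a_k$ and $\sum_{k\ge1}b_k$ over all of $\mathbb{Z}$. Then
\[
|\lambda_{n,j}-\rho_{n,j}|\le |G_{n,j}^{\mathrm{ex}}(\lambda_{n,j})-g_{n,j}(\lambda_{n,j})|+|g_{n,j}(\lambda_{n,j})-g_{n,j}(\rho_{n,j})|,
\]
and the second term is $\le C_n|\lambda_{n,j}-\rho_{n,j}|$, so after absorbing it we get $|\lambda_{n,j}-\rho_{n,j}|\le (1-C_n)^{-1}|G_{n,j}^{\mathrm{ex}}(\lambda_{n,j})-g_{n,j}(\lambda_{n,j})|$. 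The difference $G_{n,j}^{\mathrm{ex}}-g_{n,j}$ splits into two pieces: (i) the tail of the series in $k$, i.e. $\sum_{k\ge r+1}a_k+\sum_{k\ge r+1}b_k$, which by the geometric estimate of Step 1 is bounded by the first term of \eqref{eq64} (a geometric tail starting at $k=r+1$, divided by $1-C_n$); and (ii) the truncation in the Fourier index, i.e. replacing $\sum_{|n_i|\le s}$ by $\sum_{n_i\in\mathbb{Z}}$, whose error is controlled by $\sum_{|m|>s}|q_m|^2\le (b-a)^2/(\pi^2)\sum_{|m|>s}m^{-2}\le (b-a)^2/(\pi^2(s+1))$ against a single small denominator of size $4\pi^2(s+1)|s+1-2n|-M$ (arising because the truncated index forces one of the $n-n_1-\cdots-n_s$ to have modulus $\ge s+1$), giving the second term of \eqref{eq64}. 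Combining (i) and (ii) yields \eqref{eq64}.

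\textbf{Main obstacle.} The genuinely tricky part is Step 1's bookkeeping: organizing the multi-index sums so that the telescoping-difference estimate produces a clean geometric series with ratio exactly $(b-a)/[\pi(4\pi^2(2n-1)-M)]$, and then verifying the explicit constant $C_n$ and the bound $C_n<1$ without slack. The Fourier-truncation error in Step 3(ii), in particular correctly identifying the smallest denominator $4\pi^2(s+1)|s+1-2n|-M$ that appears when an index is pushed past $s$, also requires care, but it is a finite combinatorial check rather than an analytic difficulty.
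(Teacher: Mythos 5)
Your proposal follows essentially the same route as the paper: term-by-term differentiation of $a_{s,k,n},b_{s,k,n}$ using the denominator bound from \eqref{eq46} to get a geometric bound summing to $C_n$, the contraction mapping theorem for $\rho_{n,j}$, and splitting the truncation error into the $k$-tail plus the Fourier-index (indices beyond $\pm s$) tail with the small denominator $4\pi^2(s+1)|s+1-2n|-M$. Your Step 3 algebra (adding and subtracting $g_{n,j}(\lambda_{n,j})$ and absorbing $C_n|\lambda_{n,j}-\rho_{n,j}|$) is just a rewriting of the paper's use of the mean-value theorem applied to $K_{n,j}(\lambda)=\lambda-(2\pi n)^2-g_{n,j}(\lambda)$ together with $|K_{n,j}'|\geq 1-C_n$; and your note that the contraction must send $D_n$ into itself is a point the paper glosses over, so you are, if anything, slightly more careful there.
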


\begin{proof}
First we prove~\eqref{eq63} by using the mean-value theorem. To do this, we
estimate $|g_{n,j}^{\prime}(\lambda)|=|\dfrac{d}{d\lambda}g_{n,j}(\lambda)|$.
By~\eqref{eq60} we have
\begin{align*}
|g_{n,j}^{\prime}(\lambda)|  &  =\biggl|\sum\limits_{k=1}^{r}\frac{d}%
{d\lambda}a_{s,k,n}(\lambda)+(-1)^{j}e^{i2\pi nc}\sum\limits_{k=1}^{r}\frac
{d}{d\lambda}b_{s,k,n}(\lambda)\biggr|\\
&  \leq\sum\limits_{k=1}^{r}\bigl|\frac{d}{d\lambda}a_{s,k,n}(\lambda
)\bigr|+\sum\limits_{k=1}^{r}\bigl|\frac{d}{d\lambda}b_{s,k,n}(\lambda)\bigr|.
\end{align*}
First let us estimate the summands of the first term $|\frac{d}{d\lambda
}(A_{s,k,n}(\lambda))|$:
\begin{align*}
&  \biggl|\frac{d}{d\lambda}(a_{s,k,n}(\lambda))\biggr|=\biggl|\sum
_{n_{1},n_{2},...,n_{k}=-s}^{s}\frac{d}{d\lambda}\frac{q_{n_{1}}q_{n_{2}%
}\cdots q_{n_{k}}q_{-n_{1}-n_{2}-\cdots-n_{k}}}{[\lambda-(2\pi(n-n_{1}%
))^{2}]\cdots\lbrack\lambda-(2\pi(n-n_{1}-\cdots-n_{k}))^{2}]}\biggr|\\
&  \qquad<\frac{2(b-a)^{k+1}}{\pi^{k+1}(4\pi^{2}(2n-1)-M)^{k+1}}\leq\frac
{2}{\pi^{k+1}},
\end{align*}
for $k\geq1$. Thus, by the geometric series formula, we obtain
\[
\sum\limits_{k=1}^{r}\bigl|\frac{d}{d\lambda}a_{s,k,n}(\lambda)\bigr|<\frac
{2}{\pi(\pi-1)}.
\]
In the same way, one can prove that
\[
\sum\limits_{k=1}^{r}\bigl|\frac{d}{d\lambda}b_{s,k,n}(\lambda)\bigr|<\frac
{2}{\pi(\pi-1)}.
\]
Hence,
\[
|g_{n,j}^{\prime}(\lambda)|<\frac{4(b-a)^{2}}{\pi(4\pi^{2}(2n-1)-M)[\pi
(4\pi^{2}(2n-1)-M)-(b-a)]}=C_{n}\leq\frac{4}{\pi(\pi-1)}<1.
\]
Since the inequalities
\begin{equation}
|g_{j}^{\prime}(\lambda)|<C_{n}<1 \label{eq65}%
\end{equation}
hold for all $x,y\in D_{n}$,~\eqref{eq63} holds by the mean value theorem and
equation~\eqref{eq62} has a unique solution $\rho_{n,j}$ in $D_{n}$, for each
$j$ ($j=1,2$), by the contraction mapping theorem.

Now let us prove~\eqref{eq64}. By~\eqref{eq58}, we have $K_{n,j}(x)=x-(2\pi
n)^{2}-g_{n,j}(x)$ and by the definition of $\rho_{n,j}$, we write
$K_{n,j}(\rho_{n,j})=0$, for $j=1,2$. Therefore by~\eqref{eq52},~\eqref{eq53}
and~\eqref{eq60}, we obtain
\begin{align}
&  |K_{n,j}(\lambda_{n,j})-K_{n,j}(\rho_{n,j})|=|K_{n,j}(\lambda
_{n,j})|\nonumber\\
&  \qquad=\biggl|\lambda_{n,j}-(2\pi n)^{2}-\sum\limits_{k=1}^{r}%
a_{s,k,n}(\lambda_{n,j})-(-1)^{j}e^{i2\pi nc}\biggl(q_{2n}+\sum\limits_{k=1}%
^{r}b_{s,k,n}(\lambda_{n,j})\biggr)\biggr|\nonumber\\
&  \qquad\leq\biggl|\sum\limits_{k=1}^{\infty}a_{k,n}(\lambda_{n,j}%
)-\sum\limits_{k=1}^{r}a_{s,k,n}(\lambda_{n,j})\biggr|+\biggl|\sum
\limits_{k=1}^{\infty}b_{k,n}(\lambda_{n,j})-\sum\limits_{k=1}^{r}%
b_{s,k,n}(\lambda_{n,j})\biggr|. \label{eq66}%
\end{align}
For the estimation of the first term of the right-hand side of~\eqref{eq66},
we obtain
\begin{align*}
&  \biggl|\sum\limits_{k=1}^{\infty}a_{k,n}(\lambda_{n,j})-\sum\limits_{k=1}%
^{r}a_{s,k,n}(\lambda_{n,j})\biggr|\\
&  \qquad\leq\biggl|\sum\limits_{k=1}^{\infty}a_{k,n}(\lambda_{n,j}%
)-\sum\limits_{k=1}^{r}a_{k,n}(\lambda_{n,j})\biggr|+\biggl|\sum
\limits_{k=1}^{r}a_{k,n}(\lambda_{n,j})-\sum\limits_{k=1}^{r}a_{s,k,n}%
(\lambda_{n,j})\biggr|\\
&  \qquad\leq\sum\limits_{k=r+1}^{\infty}|a_{k,n}(\lambda_{n,j})|+\sum
\limits_{k=1}^{r}|a_{k,n}(\lambda_{n,j})-a_{s,k,n}(\lambda_{n,j})|.
\end{align*}
Using the estimations for $\sum\limits_{k=r+1}^{\infty}|a_{k,n}(\lambda
_{n,j})|$\ and $\sum\limits_{k=1}^{r}|a_{k,n}(\lambda_{n,j})-a_{s,k,n}%
(\lambda_{n,j})|$, by considering the greatest summands of them, we obtain
\begin{align*}
&  \biggl|\sum\limits_{k=1}^{\infty}a_{k,n}(\lambda_{n,j})-\sum\limits_{k=1}%
^{r}a_{s,k,n}(\lambda_{n,j})\biggr|<\sum\limits_{k=r+1}^{\infty}%
\frac{3(b-a)^{k+1}}{4\pi^{k+1}(4\pi^{2}(2n-1)-M)^{k}}\\
&  \qquad+\frac{3(b-a)^{2}}{\pi^{2}(s+1)^{2}[4\pi^{2}(s+1)|s+1-2n|-M]}%
\end{align*}
and
\begin{align}
&  \biggl|\sum\limits_{k=1}^{\infty}a_{k,n}(\lambda_{n,j})-\sum\limits_{k=1}%
^{r}a_{s,k,n}(\lambda_{n,j})\biggr|<\frac{3(b-a)^{r+2}}{4\pi^{r+1}(4\pi
^{2}(2n-1)-M)^{r}[\pi(4\pi^{2}(2n-1)-M)-(b-a)]}\nonumber\\
&  \qquad+\frac{3(b-a)^{2}}{\pi^{2}(s+1)^{2}[4\pi^{2}(s+1)|s+1-2n|-M]}.
\label{eq67}%
\end{align}
In the same way, one can obtain
\begin{align}
&  \biggl|\sum\limits_{k=1}^{\infty}b_{k,n}(\lambda_{n,j})-\sum\limits_{k=1}%
^{r}b_{s,k,n}(\lambda_{n,j})\biggr|<\frac{3(b-a)^{r+2}}{4\pi^{r+1}(4\pi
^{2}(2n-1)-M)^{r}[\pi(4\pi^{2}(2n-1)-M)-(b-a)]}\nonumber\\
&  \qquad+\frac{3(b-a)^{2}}{\pi^{2}(s+1)^{2}[4\pi^{2}(s+1)|s+1-2n|-M]}.
\label{eq68}%
\end{align}
Thus, by~\eqref{eq66}--\eqref{eq68} we have
\begin{align}
&  |K_{n,j}(\lambda_{n,j})-K_{n,j}(\rho_{n,j})|<\frac{3(b-a)^{r+2}}{2\pi
^{r+1}(4\pi^{2}(2n-1)-M)^{r}[\pi(4\pi^{2}(2n-1)-M)-(b-a)]}\nonumber\\
&  \qquad+\frac{6(b-a)^{2}}{\pi^{2}(s+1)^{2}[4\pi^{2}(s+1)|s+1-2n|-M]}.
\label{eq69}%
\end{align}
To apply the mean value theorem, we estimate $|K_{n,j}^{\prime}(\lambda)|$:
\begin{equation}
|K_{n,j}^{\prime}(\lambda)|=|1-g_{n,j}^{\prime}(\lambda)|\geq|1-|g_{n,j}%
^{\prime}(\lambda)||\geq1-C_{n}. \label{eq70}%
\end{equation}
By the mean value formula,~\eqref{eq69} and~\eqref{eq70}, we obtain
\[
|K_{n,j}(\lambda_{n,j})-K_{n,j}(\rho_{n,j})|=|K_{n,j}^{\prime}(\xi
)||\lambda_{n,j}-\rho_{n,j}|,\qquad\xi\in\lbrack(2\pi n)^{2}-M,(2\pi
n)^{2}+M]
\]
and
\begin{align*}
&  |\lambda_{n,j}-\rho_{n,j}|=\frac{|K_{n,j}(\lambda_{n,j})-K_{n,j}(\rho
_{n,j})|}{|K_{n,j}^{\prime}(\xi)|}\\
&  \qquad<\frac{3(b-a)^{r+2}}{2\pi^{r+1}(4\pi^{2}(2n-1)-M)^{r}[\pi(4\pi
^{2}(2n-1)-M)-(b-a)](1-C_{n})}\\
&  \qquad+\frac{6(b-a)^{2}}{\pi^{2}(s+1)^{2}[4\pi^{2}(s+1)|s+1-2n|-M](1-C_{n}%
)}.
\end{align*}

\end{proof}

Now, we give an analogous theorem to Theorem~\ref{t9} for the case $n=0$.

\begin{theorem}
\label{t10} If $M\leq4\pi^{2}/3$, then for all $x$ and $y$ from the interval
$D_{0}=[-M,M]$ the relations
\begin{gather*}
|g_{0}(x)-g_{0}(y)|\leq C_{0}|x-y|,\\
C_{0}=\frac{3(b-a)^{2}}{4\pi(2\pi^{2}-M)[\pi(4\pi^{2}-M)-(b-a)]}\leq\frac
{3}{\pi(\pi-1)}<1,
\end{gather*}
hold and the equation\
\[
\lambda=g_{0}(\lambda)
\]
has a unique solution $\rho_{0}$ in $D_{0}$, where $g_{0}(\lambda)$\ is
defined by~\eqref{eq61}. Moreover
\begin{align}
&  |\lambda_{0}-\rho_{0}|<\frac{9(b-a)^{r+2}}{16\pi^{r+1}(4\pi^{2}%
-M)^{r-1}(2\pi^{2}-M)[\pi(4\pi^{2}-M)-(b-a)](1-C_{0})}\nonumber\\
&  \qquad+\frac{3(b-a)^{2}}{\pi^{2}(s+1)^{2}[4\pi^{2}(s+1)^{2}-M](1-C_{0})}.
\label{eq71}%
\end{align}

\end{theorem}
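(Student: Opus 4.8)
The plan is to mirror the structure of the proof of Theorem~\ref{t9}, adapting every estimate to the case $n=0$ where the free eigenvalue is $0$ and the interval is $D_0=[-M,M]$. First I would establish the Lipschitz bound: by~\eqref{eq61} the map $g_0$ is a finite sum $\sum_{k=1}^{r}a_{s,k,0}(\lambda)$, so I differentiate term by term and bound $\bigl|\frac{d}{d\lambda}a_{s,k,0}(\lambda)\bigr|$ by isolating the largest summand, exactly as in Theorem~\ref{t9}, but now using the denominator estimates available for $n=0$. Concretely, for $\lambda\in D_0$ and $k\neq 0$ one has $|\lambda-(2\pi k)^2|\ge 4\pi^2-M$ (and the relevant product of such factors in the denominator of the derivative), while each Fourier coefficient satisfies $|q_k|\le\frac{b-a}{2\pi|k|}\le\frac{b-a}{2\pi}$; summing the resulting geometric series in $k$ gives the stated constant $C_0=\frac{3(b-a)^2}{4\pi(2\pi^2-M)[\pi(4\pi^2-M)-(b-a)]}$ and, using $M\le 4\pi^2/3$ together with $b-a<2M\le 8\pi^2/3$, the crude bound $C_0\le\frac{3}{\pi(\pi-1)}<1$. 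The mean value theorem then yields $|g_0(x)-g_0(y)|\le C_0|x-y|$, and since $C_0<1$ the contraction mapping theorem gives a unique fixed point $\rho_0$ of $\lambda\mapsto g_0(\lambda)$ in $D_0$; here one should note $g_0$ maps $D_0$ into itself, which follows from $|g_0(\lambda)|\le\frac{C_0}{\text{(something)}}$ being small compared to $M$, or more simply from $g_0(0)=a_1(0)+\cdots$ being $O((b-a)^2)$ and the Lipschitz bound.

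For the error estimate~\eqref{eq71} I would repeat the telescoping argument of Theorem~\ref{t9}. Define $K_0(\lambda)=\lambda-g_0(\lambda)$ as in~\eqref{eq59}; then $K_0(\rho_0)=0$ by definition of $\rho_0$, and $\lambda_0-\sum_{k=1}^\infty a_k(\lambda_0)=0$ by Theorem~\ref{t8}, so
\[
|K_0(\lambda_0)|=|K_0(\lambda_0)-K_0(\rho_0)|\le\Bigl|\sum_{k=1}^{\infty}a_k(\lambda_0)-\sum_{k=1}^{r}a_{s,k,0}(\lambda_0)\Bigr|.
\]
The right-hand side splits into a tail $\sum_{k=r+1}^{\infty}|a_k(\lambda_0)|$ and a truncation error $\sum_{k=1}^{r}|a_k(\lambda_0)-a_{s,k,0}(\lambda_0)|$. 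For the tail I bound each $|a_k(\lambda_0)|$ by its largest summand — using $|q_{n_i}|\le\frac{b-a}{2\pi}$ and $|\lambda_0-(2\pi n_i')^2|\ge 4\pi^2-M$ in all but possibly one factor, and $|\lambda_0-(2\pi\cdot 0)^2|=|\lambda_0|$ handled separately — giving a geometric series that sums to a term of the form $\frac{9(b-a)^{r+2}}{16\pi^{r+1}(4\pi^2-M)^{r-1}(2\pi^2-M)[\pi(4\pi^2-M)-(b-a)]}$. For the truncation error, the omitted indices have some $|n_i|\ge s+1$, producing the factor $\frac{1}{(s+1)^2}$ and the denominator $4\pi^2(s+1)^2-M$ (the product $(s+1)\cdot|s+1-0|$ specializing the $|s+1-2n|$ of Theorem~\ref{t9} to $n=0$), yielding the term $\frac{3(b-a)^2}{\pi^2(s+1)^2[4\pi^2(s+1)^2-M]}$. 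Finally $|K_0'(\lambda)|=|1-g_0'(\lambda)|\ge 1-C_0$ on $D_0$, so the mean value theorem gives $|\lambda_0-\rho_0|=|K_0(\lambda_0)|/|K_0'(\xi)|\le|K_0(\lambda_0)|/(1-C_0)$, which is~\eqref{eq71}.

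The main obstacle is bookkeeping rather than conceptual: one must track carefully which denominator factor in $a_k(\lambda_0)$ involves the resonant index $0$ (contributing $|\lambda_0|$, which is only $\le M$, not bounded below by $4\pi^2-M$) versus the nonresonant indices (bounded below by $4\pi^2-M$). The structure of $a_k$ from~\eqref{eq5} ensures that in each denominator at most one partial sum $n-n_1-\cdots-n_s$ can vanish and — by the summation restriction $n_1+\cdots+n_s\neq 0,2n=0$ — in fact none does when $n=0$, so actually every factor is nonresonant; this is what makes the $n=0$ estimates cleaner and explains the slightly different exponent pattern $(4\pi^2-M)^{r-1}(2\pi^2-M)$ appearing in~\eqref{eq71} (one factor degraded to $2\pi^2-M$ coming from a $|q_k|/|k|$ bound where $|k|$ could be as small as $1$, i.e. from writing $|q_k|\le\frac{b-a}{2\pi}$ rather than gaining an extra decay). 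I would double-check that the constant $\frac{3}{\pi(\pi-1)}$ bound on $C_0$ is consistent with $M\le 4\pi^2/3$ by plugging in the extreme value, and confirm that $g_0$ indeed maps $D_0$ to $D_0$ so that the contraction mapping theorem applies on the complete metric space $D_0$. Since all these steps are direct analogues of Theorem~\ref{t9} with $n=0$, the proof can be stated briefly as "the proof is analogous to that of Theorem~\ref{t9}," with the specialization $|s+1-2n|\rightsquigarrow s+1$ and the resonance analysis above noted explicitly.
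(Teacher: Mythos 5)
Your proposal matches the paper's approach: the published proof simply states ``the proof is similar to the proof of Theorem~\ref{t9}'' and lists the four key $n=0$ estimates (the bound on $|\frac{d}{d\lambda}a_{s,k,0}|$, the resulting $C_0\leq\frac{3}{\pi(\pi-1)}<1$, the bound $|a_{k,0}(\lambda_0)|<\frac{9(b-a)^{k+1}}{16\pi^{k+1}(4\pi^2-M)^{k-1}(2\pi^2-M)}$, and the $K_0$ error split into tail plus truncation), which is exactly the skeleton you reconstruct, including the specialization $|s+1-2n|\rightsquigarrow s+1$. Your additional remarks — that for $n=0$ the index restriction $n_1+\cdots+n_s\neq 0,2n$ forces every denominator factor to be nonresonant, and that the self-mapping of $D_0$ under $g_0$ deserves a check before invoking the contraction mapping theorem — are correct and go slightly beyond what the paper records, though your heuristic attribution of the $(2\pi^2-M)$ factor to a lost $|q_k|/|k|$ decay is speculative and not what the paper's stated bound actually isolates.
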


\begin{proof}
The proof is similar to the proof of Theorem~\ref{t9} by the following
estimates:
\begin{align*}
&  \biggl|\frac{d}{d\lambda}(a_{s,k,0}(\lambda))\biggr|=\biggl|\sum
_{n_{1},n_{2},...,n_{k}=-s}^{s}\frac{d}{d\lambda}\frac{q_{n_{1}}q_{n_{2}%
}\cdots q_{n_{k}}q_{-n_{1}-n_{2}-\cdots-n_{k}}}{[\lambda-(2\pi(-n_{1}%
))^{2}]\cdots\lbrack\lambda-(2\pi(-n_{1}-\cdots-n_{k}))^{2}]}\biggr|\\
&  \qquad<\frac{3(b-a)^{k+1}}{4\pi^{k+1}(4\pi^{2}-M)^{k}(2\pi^{2}-M)}\leq
\frac{3}{\pi^{k+1}},
\end{align*}
for $k\geq1$,
\[
|g_{0}^{\prime}(\lambda)|=\biggl|\sum\limits_{k=1}^{r}\frac{d}{d\lambda
}a_{s,k,0}(\lambda)\biggr|<\frac{3(b-a)^{2}}{4\pi(2\pi^{2}-M)[\pi(4\pi
^{2}-M)-(b-a)]}=C_{0}\leq\frac{3}{\pi(\pi-1)}<1,
\]%
\[
|a_{k,0}(\lambda_{0})|<\dfrac{9(b-a)^{k+1}}{16\pi^{k+1}(4\pi^{2}-M)^{k-1}%
(2\pi^{2}-M)}\leq\dfrac{6}{\pi^{k-1}},
\]
for $k\geq1$, and
\begin{align*}
&  |K_{0}(\lambda_{0})-K_{0}(\rho_{0})|\leq\sum\limits_{k=r+1}^{\infty
}|a_{k,0}(\lambda_{0})|+\sum\limits_{k=1}^{r}|a_{k,0}(\lambda_{0}%
)-\sum\limits_{k=1}^{r}a_{s,k,0}(\lambda_{0})|\\
&  \qquad<\frac{9(b-a)^{r+2}}{16\pi^{r+1}(4\pi^{2}-M)^{r-1}(2\pi^{2}%
-M)[\pi(4\pi^{2}-M)-(b-a)]}+\frac{3(b-a)^{2}}{\pi^{2}(s+1)^{2}[4\pi
^{2}(s+1)^{2}-M]},
\end{align*}
where $K_{0}(\lambda)$\ is defined by~\eqref{eq59}.
\end{proof}

Now let us approximate $\rho_{n,j}$ by the fixed point iterations:
\begin{equation}
x_{n,i+1}=(2\pi n)^{2}+g_{n,1}(x_{n,i}), \label{eq72}%
\end{equation}
and
\begin{equation}
y_{n,i+1}=(2\pi n)^{2}+g_{n,2}(y_{n,i}), \label{eq73}%
\end{equation}
where $g_{n,j}(x)$ ($j=1,2$) is defined by~\eqref{eq60}. Since
\begin{align*}
|g_{n,j}(\lambda_{n,j})|  &  =\biggl|\sum\limits_{k=1}^{r}a_{s,k,n}%
(\lambda_{n,j})-(-1)^{j}e^{i2\pi nc}\biggl(q_{2n}+\sum\limits_{k=1}%
^{r}b_{s,k,n}(\lambda_{n,j})\biggr)\biggr|\\
&  \leq\biggl|\sum\limits_{k=1}^{r}a_{s,k,n}(\lambda_{n,j}%
)\biggr|+\biggl|q_{2n}+\sum\limits_{k=1}^{r}b_{s,k,n}(\lambda_{n,j})\biggr|\\
&  \leq\sum\limits_{k=1}^{r}|a_{s,k,n}(\lambda_{n,j})|+|q_{2n}|+\sum
\limits_{k=1}^{r}|b_{s,k,n}(\lambda_{n,j})|,
\end{align*}
first let us estimate $|a_{s,k,n}(\lambda)|$. The estimation of $|b_{s,k,n}%
(\lambda)|$ is similar.
\begin{equation}
\sum\limits_{k=1}^{r}|a_{s,k,n}(\lambda)|<\sum\limits_{k=1}^{r}\frac
{3(b-a)^{k+1}}{4\pi^{k+1}(4\pi^{2}(2n-1)-M)^{k}}\leq\sum\limits_{k=1}^{r}%
\frac{2}{\pi^{k-1}}<\frac{2\pi}{\pi-1}. \label{eq74}%
\end{equation}
Similarly, we obtain
\[
|q_{2n}|+\sum\limits_{k=1}^{r}|b_{s,k,n}(\lambda)|<\frac{(b-a)}{2\pi n}%
+\frac{2\pi}{\pi-1}.
\]
Hence, we have
\begin{equation}
|g_{n,j}(\lambda_{n,j})|<\frac{(b-a)}{2\pi n}+\sum\limits_{k=1}^{r}%
\frac{3(b-a)^{k+1}}{2\pi^{k+1}(4\pi^{2}(2n-1)-M)^{k}}<\frac{(b-a)}{2\pi
n}+\frac{4\pi}{\pi-1}. \label{eq75}%
\end{equation}
On the other hand, writing $4\pi^{2}(2n-1)$ instead of $4\pi^{2}(2n-1)-M$
in~\eqref{eq74} and~\eqref{eq75}, we obtain
\begin{align}
|g_{n,j}((2\pi n)^{2})|  &  \leq\sum\limits_{k=1}^{r}|a_{s,k,n}((2\pi
n)^{2})|+|q_{2n}|+\sum\limits_{k=1}^{r}|b_{s,k,n}((2\pi n)^{2})|\nonumber\\
&  <\frac{(b-a)}{2\pi n}+\sum\limits_{k=1}^{r}\frac{3(b-a)^{k+1}}{2\pi
^{k+1}(4\pi^{2}(2n-1))^{k}}<\frac{(b-a)}{2\pi n}+\frac{3(b-a)^{2}}{2\pi
\lbrack4\pi^{3}(2n-1)-(b-a)]}, \label{eq76}%
\end{align}
since $|(2\pi n)^{2}-(2\pi k)^{2}|\geq4\pi^{2}(2n-1)$, for $n=1,2,\ldots$. Now
we state the following result.

\begin{theorem}
\label{t11} If $M\leq\dfrac{4\pi^{2}(2n-1)}{3}$, then the following
estimations hold for the sequences $\{x_{n,i}\}$ and $\{y_{n,i}\}$ defined
by~\eqref{eq72} and~\eqref{eq73}:
\begin{align}
|x_{n,i}-\rho_{n,1}|  &  <(C_{n})^{i}\biggl(\frac{(b-a)}{2\pi n(1-C_{n}%
)}+\frac{3(b-a)^{2}}{2\pi\lbrack4\pi^{3}(2n-1)-(b-a)](1-C_{n})}%
\biggr),\label{eq77}\\
|y_{n,i}-\rho_{n,2}|  &  <(C_{n})^{i}\biggl(\frac{(b-a)}{2\pi n(1-C_{n}%
)}+\frac{3(b-a)^{2}}{2\pi\lbrack4\pi^{3}(2n-1)-(b-a)](1-C_{n})}\biggr),
\label{eq78}%
\end{align}
for $i=1,2,3,\ldots$, where $C_{n}$ is defined in~\eqref{eq63} and
$n=1,2,\ldots$.
\end{theorem}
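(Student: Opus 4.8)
The plan is to recognize that \eqref{eq72} and \eqref{eq73} are the Picard (fixed point) iterations, started from the midpoint $x_{n,0}=y_{n,0}=(2\pi n)^{2}$ of $D_{n}$, for the two maps
\[
T_{n,1}(\lambda):=(2\pi n)^{2}+g_{n,1}(\lambda),\qquad T_{n,2}(\lambda):=(2\pi n)^{2}+g_{n,2}(\lambda),
\]
and then to apply the quantitative form of Banach's fixed point theorem. Everything needed about $T_{n,1}$ and $T_{n,2}$ is already furnished by Theorem~\ref{t9}: estimate \eqref{eq63} says each $T_{n,j}$ is Lipschitz on $D_{n}=[(2\pi n)^{2}-M,(2\pi n)^{2}+M]$ with constant $C_{n}<1$, it maps $D_{n}$ into itself (this self-mapping is precisely what makes the contraction mapping theorem applicable there, as used in the proof of Theorem~\ref{t9}), and its unique fixed point in $D_{n}$ is $\rho_{n,j}$. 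Since the initial value $(2\pi n)^{2}$ lies in $D_{n}$, every iterate remains in $D_{n}$, so \eqref{eq63} may legitimately be invoked at each step.

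First I would record the geometric decay of consecutive differences. From $x_{n,i+1}-x_{n,i}=T_{n,1}(x_{n,i})-T_{n,1}(x_{n,i-1})$ and \eqref{eq63}, induction on $i$ gives $|x_{n,i+1}-x_{n,i}|\leq (C_{n})^{i}\,|x_{n,1}-x_{n,0}|$ for $i\geq1$. Summing the geometric series and using completeness of the closed interval $D_{n}$ (so the Cauchy sequence $\{x_{n,i}\}$ converges, and by continuity and uniqueness its limit is $\rho_{n,1}$) yields
\[
|x_{n,i}-\rho_{n,1}|\leq\sum_{\ell=i}^{\infty}|x_{n,\ell+1}-x_{n,\ell}|\leq\frac{(C_{n})^{i}}{1-C_{n}}\,|x_{n,1}-x_{n,0}|,
\]
and the identical argument with $T_{n,2}$ gives $|y_{n,i}-\rho_{n,2}|\leq\dfrac{(C_{n})^{i}}{1-C_{n}}\,|y_{n,1}-y_{n,0}|$.

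It then remains only to bound the first step. Because $x_{n,0}=(2\pi n)^{2}$ we have $|x_{n,1}-x_{n,0}|=|g_{n,1}((2\pi n)^{2})|$, and likewise $|y_{n,1}-y_{n,0}|=|g_{n,2}((2\pi n)^{2})|$; both quantities are controlled by \eqref{eq76}, namely
\[
|g_{n,j}((2\pi n)^{2})|<\frac{(b-a)}{2\pi n}+\frac{3(b-a)^{2}}{2\pi[4\pi^{3}(2n-1)-(b-a)]}.
\]
Inserting this into the two displayed estimates above produces exactly \eqref{eq77} and \eqref{eq78}. There is no genuine obstacle in this argument; the only point deserving a word of care is checking that each iterate stays inside $D_{n}$, where the Lipschitz bound \eqref{eq63} is valid, and this is immediate from the self-mapping property of $T_{n,j}$ recorded in Theorem~\ref{t9} together with $x_{n,0},y_{n,0}\in D_{n}$.
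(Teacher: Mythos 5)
Your proof is correct and reaches exactly the paper's bound; the only genuine difference is in how the factor $1/(1-C_{n})$ is extracted. The paper iterates $|x_{n,i}-\rho_{n,1}|<(C_{n})^{i}|x_{n,0}-\rho_{n,1}|$ and then bounds the \emph{initial} error $|x_{n,0}-\rho_{n,1}|$ directly: it writes $\rho_{n,1}-x_{n,0}=g_{n,1}(\rho_{n,1})-g_{n,1}(x_{n,0})+g_{n,1}((2\pi n)^{2})$, applies the mean value theorem to the difference $g_{n,1}(\rho_{n,1})-g_{n,1}(x_{n,0})$, and isolates $\rho_{n,1}-x_{n,0}$ to obtain $|\rho_{n,1}-x_{n,0}|\leq |g_{n,1}((2\pi n)^{2})|/(1-C_{n})$. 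You instead invoke the standard a priori error estimate for the Banach iteration, telescoping the differences $|x_{n,\ell+1}-x_{n,\ell}|$ and summing the geometric series, which gives $|x_{n,i}-\rho_{n,1}|\leq (C_{n})^{i}\,|x_{n,1}-x_{n,0}|/(1-C_{n})$ and $|x_{n,1}-x_{n,0}|=|g_{n,1}((2\pi n)^{2})|$ because $x_{n,0}=(2\pi n)^{2}$. Both routes reduce to the same estimate~\eqref{eq76} on $|g_{n,j}((2\pi n)^{2})|$ and give identical constants, so the difference is purely one of bookkeeping: the paper's MVT argument avoids the explicit geometric-series summation, while yours is the textbook a priori bound and perhaps more transparent. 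Your remark that one must check the iterates remain in $D_{n}$ so that~\eqref{eq63} may be applied at each step is a legitimate point of care; the paper leaves this implicit, resting on the same self-mapping property it invokes (also without explicit verification) in the proof of Theorem~\ref{t9}.
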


\begin{proof}
Without loss of generality, we can take $x_{n,0}=(2\pi n)^{2}$.
By~\eqref{eq62},~\eqref{eq63}, and~\eqref{eq72} we have
\begin{align*}
&  |x_{n,i}-\rho_{n,1}|=|(2\pi n)^{2}+g_{n,1}(x_{n,i-1})-((2\pi n)^{2}%
+g_{n,1}(\rho_{n,1}))|\\
&  \qquad=|g_{n,1}(x_{n,i-1})-g_{n,1}(\rho_{n,1})|<C_{n}|x_{n,i-1}-\rho
_{n,1}|<(C_{n})^{i}|x_{n,0}-\rho_{n,1}|.
\end{align*}
Therefore it is enough to estimate $|x_{n,0}-\rho_{n,1}|$. By definitions of
$\rho_{n,j}$ and $x_{n,0}$ we obtain
\[
\rho_{n,1}-x_{n,0}=g_{n,1}(\rho_{n,1})+(2\pi n)^{2}-x_{n,0}=g_{n,1}(\rho
_{n,1})-g_{n,1}(x_{n,0})+g_{n,1}((2\pi n)^{2})
\]
and by the mean value theorem there exists $x\in\lbrack(2\pi n)^{2}-M,(2\pi
n)^{2}+M]$ such that
\[
g_{n,1}(\rho_{n,1})-g_{n,1}(x_{n,0})=g_{n,1}^{\prime}(x)(\rho_{n,1}-x_{n,0}).
\]
The last two equalities imply that
\[
(\rho_{n,1}-x_{n,0})(1-g_{n,1}^{\prime}(x))=g_{n,1}((2\pi n)^{2}).
\]
Hence by~\eqref{eq65} and~\eqref{eq76} we have
\[
|\rho_{n,1}-x_{n,0}|\leq\frac{|g_{n,1}((2\pi n)^{2})|}{1-C_{n}}<\frac
{(b-a)}{2\pi n(1-C_{n})}+\frac{3(b-a)^{2}}{2\pi\lbrack4\pi^{3}%
(2n-1)-(b-a)](1-C_{n})}%
\]
and
\[
|x_{n,i}-\rho_{n,1}|<(C_{n})^{i}\biggl(\frac{(b-a)}{2\pi n(1-C_{n})}%
+\frac{3(b-a)^{2}}{2\pi\lbrack4\pi^{3}(2n-1)-(b-a)](1-C_{n})}\biggr).
\]
One can easily show by the same way that
\[
|y_{n,i}-\rho_{n,2}|<(C_{n})^{i}\biggl(\frac{(b-a)}{2\pi n(1-C_{n})}%
+\frac{3(b-a)^{2}}{2\pi\lbrack4\pi^{3}(2n-1)-(b-a)](1-C_{n})}\biggr)
\]
holds.
\end{proof}

An analogous theorem to Theorem~\ref{t11} can be stated for the case $n=0$.

\begin{theorem}
\label{t12} If $M\leq4\pi^{2}/3$, then the following estimation holds for the
sequence $\{x_{0,i}\}$ defined by $x_{0,i}=g_{0}(x_{0,i})$, where
$g_{0}(\lambda)$ is defined by~\eqref{eq61}:
\begin{equation}
|x_{0,i}-\rho_{0}|\leq(C_{0})^{i}\frac{(b-a)^{2}}{2\pi\lbrack2\pi
^{3}-(b-a)](1-C_{0})}, \label{eq79}%
\end{equation}
where $C_{0}$ is defined in~Theorem~\ref{t10}.
\end{theorem}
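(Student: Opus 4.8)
The plan is to repeat the proof of Theorem~\ref{t11} in the degenerate case $n=0$. Take as starting point $x_{0,0}=0\in D_{0}=[-M,M]$. By Theorem~\ref{t10}, $g_{0}$ maps $D_{0}$ into itself and $|g_{0}(x)-g_{0}(y)|\leq C_{0}|x-y|$ with $C_{0}<1$, so the iteration $x_{0,i+1}=g_{0}(x_{0,i})$ stays in $D_{0}$, and since $\rho_{0}=g_{0}(\rho_{0})$,
\[
|x_{0,i}-\rho_{0}|=|g_{0}(x_{0,i-1})-g_{0}(\rho_{0})|\leq C_{0}|x_{0,i-1}-\rho_{0}|\leq\cdots\leq (C_{0})^{i}|x_{0,0}-\rho_{0}|=(C_{0})^{i}|\rho_{0}|.
\]
Thus it suffices to bound $|\rho_{0}|$.

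To do this I would argue exactly as for $|\rho_{n,1}-x_{n,0}|$ in Theorem~\ref{t11}: from $x_{0,0}=0$ and $\rho_{0}=g_{0}(\rho_{0})$ we get $\rho_{0}=g_{0}(\rho_{0})-g_{0}(0)+g_{0}(0)$, and the mean value theorem gives $\xi\in D_{0}$ with $g_{0}(\rho_{0})-g_{0}(0)=g_{0}^{\prime}(\xi)\rho_{0}$, where $|g_{0}^{\prime}(\xi)|\leq C_{0}$ by Theorem~\ref{t10} (recall $g_{0}$ is real-valued on $D_{0}$, each $a_{s,k,0}(\lambda)$ being invariant under conjugation via $q_{-m}=\overline{q_{m}}$). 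Hence $(1-g_{0}^{\prime}(\xi))\rho_{0}=g_{0}(0)$ and $|\rho_{0}|\leq|g_{0}(0)|/(1-C_{0})$.

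The last ingredient is the estimate $|g_{0}(0)|<(b-a)^{2}/(2\pi[2\pi^{3}-(b-a)])$. Since $g_{0}(0)=\sum_{k=1}^{r}a_{s,k,0}(0)$, I would bound each $|a_{s,k,0}(0)|$: at $\lambda=0$ every denominator factor in~\eqref{eq61} equals $-(2\pi(n_{1}+\cdots+n_{s}))^{2}$ with $n_{1}+\cdots+n_{s}$ a nonzero integer, hence has modulus $\geq 4\pi^{2}$; using $|q_{m}|\leq(b-a)/(\pi|m|)$ from~\eqref{eq34} and summing over $n_{1},\ldots,n_{k}$ by retaining the largest summands, exactly as in the proof of Theorem~\ref{t10}, one obtains $|a_{s,k,0}(0)|<(b-a)^{k+1}/(2\pi(2\pi^{3})^{k})$ for $k\geq1$, uniformly in $s,r$. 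Since $b-a\leq 2M\leq 8\pi^{2}/3<2\pi^{3}$, the geometric series gives
\[
|g_{0}(0)|\leq\sum_{k=1}^{r}|a_{s,k,0}(0)|<\sum_{k=1}^{\infty}\frac{(b-a)^{k+1}}{2\pi(2\pi^{3})^{k}}=\frac{(b-a)^{2}}{2\pi[2\pi^{3}-(b-a)]},
\]
and combining with $|\rho_{0}|\leq|g_{0}(0)|/(1-C_{0})$ and the geometric decay above yields~\eqref{eq79}.

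The only point requiring real work is the per-term bound for $|a_{s,k,0}(0)|$ in the last step: one has to count and sum the admissible tuples $(n_{1},\ldots,n_{k})$ carefully enough that the total stays below $(b-a)^{k+1}/(2\pi(2\pi^{3})^{k})$ with a constant independent of $s$ and $r$. This is the same bookkeeping already carried out for Theorem~\ref{t10} (where the denominators were bounded below by $4\pi^{2}-M$ instead of $4\pi^{2}$), so it is routine; the remainder is a direct transcription of the $n\geq1$ argument.
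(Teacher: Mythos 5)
Your proposal is correct and follows essentially the same route as the paper: reduce by contraction to bounding $|x_{0,0}-\rho_{0}|=|\rho_{0}|$, apply the mean-value theorem with $|g_{0}'|\leq C_{0}$ to get $|\rho_{0}|\leq|g_{0}(0)|/(1-C_{0})$, and then bound $|a_{s,k,0}(0)|<(b-a)^{k+1}/(2\pi(2\pi^{3})^{k})$ term by term (using that the denominators at $\lambda=0$ have modulus $\geq 4\pi^{2}$) and sum the geometric series. This matches the paper's proof of Theorem~\ref{t12}, which cites the same per-term estimate $|a_{s,k,0}(0)|<(b-a)^{k+1}/(2^{k+1}\pi^{3k+1})$ and otherwise refers back to the argument for Theorem~\ref{t11}.
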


\begin{proof}
The proof is similar to the proof of Theorem~\ref{t11} by the following
estimates:
\[
|g_{0}(0)|\leq\sum\limits_{k=1}^{r}|A_{s,k,0}(0)|<\sum\limits_{k=1}^{r}%
\frac{(b-a)^{k+1}}{2^{k+1}\pi^{3k+1}}<\frac{(b-a)^{2}}{2\pi\lbrack2\pi
^{3}-(b-a)]}.
\]

\end{proof}

Thus by~\eqref{eq64},~\eqref{eq71},~\eqref{eq77}-\eqref{eq79}, we have the
approximations $x_{0,i}$, $x_{n,i}$, and $y_{n,i}$ for $\lambda_{0}$,
$\lambda_{n,1}$, and $\lambda_{n,2}$, respectively, with the errors
\begin{align*}
&  |\lambda_{0}-x_{0,i}|<\frac{9(b-a)^{r+2}}{16\pi^{r+1}(4\pi^{2}%
-M)^{r-1}(2\pi^{2}-M)[\pi(4\pi^{2}-M)-(b-a)](1-C_{0})}\\
&  \qquad+\frac{3(b-a)^{2}}{\pi^{2}(s+1)^{2}[4\pi^{2}(s+1)^{2}-M](1-C_{0}%
)}+(C_{0})^{i}\frac{(b-a)^{2}}{2\pi\lbrack2\pi^{3}-(b-a)](1-C_{0})},
\end{align*}%
\begin{align*}
&  |\lambda_{n,1}-x_{n,i}|<\frac{3(b-a)^{r+2}}{2\pi^{r+1}(4\pi^{2}%
(2n-1)-M)^{r}[\pi(4\pi^{2}(2n-1)-M)-(b-a)](1-C_{n})}\\
&  \qquad+\frac{6(b-a)^{2}}{\pi^{2}(s+1)^{2}[4\pi^{2}(s+1)|s+1-2n|-M](1-C_{n}%
)}\\
&  \qquad+(C_{n})^{i}\biggl(\frac{(b-a)}{2\pi n(1-C_{n})}+\frac{3(b-a)^{2}%
}{2\pi\lbrack4\pi^{3}(2n-1)-(b-a)](1-C_{n})}\biggr),
\end{align*}
and
\begin{align*}
&  |\lambda_{n,2}-y_{n,i}|<\frac{3(b-a)^{r+2}}{2\pi^{r+1}(4\pi^{2}%
(2n-1)-M)^{r}[\pi(4\pi^{2}(2n-1)-M)-(b-a)](1-C_{n})}\\
&  \qquad+\frac{6(b-a)^{2}}{\pi^{2}(s+1)^{2}[4\pi^{2}(s+1)|s+1-2n|-M](1-C_{n}%
)}\\
&  \qquad+(C_{n})^{i}\biggl(\frac{(b-a)}{2\pi n(1-C_{n})}+\frac{3(b-a)^{2}%
}{2\pi\lbrack4\pi^{3}(2n-1)-(b-a)](1-C_{n})}\biggr).
\end{align*}
By these error formulas it is clear that the error gets smaller as $r$ and $s$\ increase.

Now, we consider the operator $L_{\pi}(q)$ which is associated with the
antiperiodic boundary conditions. The analogous formula to~\eqref{eq4} is
\begin{align}
&  \biggl(\mu_{n,j}-(2n-1)^{2}\pi^{2}-\sum\limits_{k=1}^{m}\eta_{k}(\mu
_{n,j})\biggr)(\Phi_{n,j},e^{i(2n-1)\pi x})\nonumber\\
&  \qquad-\biggl(q_{2n-1}+\sum\limits_{k=1}^{m}\nu_{k}(\mu_{n,j}%
)\biggr)(\Phi_{n,j},e^{-i(2n-1)\pi x})=\delta_{m}(\mu_{n,j}), \label{eq80}%
\end{align}
where $\Phi_{n,j}$\ is an eigenfunction corresponding to the antiperiodic
eigenvalue $\mu_{n,j}$ and
\begin{align*}
\eta_{k}(\mu)  &  =\sum_{n_{1},n_{2},...,n_{k}}\frac{q_{n_{1}}q_{n_{2}}\cdots
q_{n_{k}}q_{-n_{1}-n_{2}-\cdots-n_{k}}}{[\mu-(2(n-n_{1})-1)^{2}\pi^{2}%
]\cdots\lbrack\mu-(2(n-n_{1}-\cdots-n_{k})-1)^{2}\pi^{2}]},\\
\nu_{k}(\mu)  &  =\sum_{n_{1},n_{2},...,n_{k}}\frac{q_{n_{1}}q_{n_{2}}\cdots
q_{n_{k}}q_{2n-1-n_{1}-n_{2}-\cdots-n_{k}}}{[\mu-(2(n-n_{1})-1)^{2}\pi
^{2}]\cdots\lbrack\mu-(2(n-n_{1}-\cdots-n_{k})-1)^{2}\pi^{2}]},\\
\delta_{m}(\mu)  &  =\sum_{n_{1},n_{2},...,n_{m+1}}\frac{q_{n_{1}}q_{n_{2}%
}\cdots q_{n_{m}}q_{n_{m+1}}(q\Phi_{n,j},e^{i(2(n-n_{1}-\cdots-n_{m+1})-1)\pi
x})}{[\mu-(2(n-n_{1})-1)^{2}\pi^{2}]\cdots\lbrack\mu-(2(n-n_{1}-\cdots
-n_{m+1})-1)^{2}\pi^{2}]}.
\end{align*}
Here, the sums are taken under the conditions $\sum\limits_{i=1}^{l}n_{i}%
\neq0,2n-1$ for $l=1,2,...,m+1$.

Therefore, the analogous formula to~\eqref{eq54} is
\begin{equation}
\biggl(\mu_{n,j}-(2n-1)^{2}\pi^{2}-\sum\limits_{k=1}^{m}\eta_{k}(\mu
_{n,j})\biggr)(\Phi_{n,j},e^{i(2n-1)\pi x})=\biggl(q_{2n-1}+\sum
\limits_{k=1}^{m}\nu_{k}(\mu_{n,j})\biggr)(\Phi_{n,j},e^{-i(2n-1)\pi x}).
\label{eq81}%
\end{equation}
Using~\eqref{eq47},~\eqref{eq80},~\eqref{eq81} and similar estimations to
those for the periodic eigenvalues, one can obtain analogous theorems to
Theorem~\ref{t7}, Theorem~\ref{t9}, and Theorem~\ref{t11} for the operator
$L_{\pi}(q)$.

Now, we present a numerical example. From the numerical results, we conclude
that, we can impose the conditions $M<2\pi^{2}$, for $n=0$, and $M<2\pi
^{2}(2n-1)$, for $n=1,2,\ldots$, where $M=\max\{|a|,b\}$,\ for the periodic
eigenvalues, instead of the conditions $M\leq\dfrac{4\pi^{2}}{3}$, for $n=0$,
and $M\leq\dfrac{4\pi^{2}(2n-1)}{3}$, for $n=1,2,\ldots$, for some specific
values of $c\in(0,1)$. Similarly, we can impose the conditions $M<4\pi^{2}$,
for $n=1$, and $M<4\pi^{2}(n-1)$, for $n=2,3,\ldots$,\ for the antiperiodic
eigenvalues, instead of the conditions $M\leq\dfrac{8\pi^{2}}{3}$, for $n=1$,
and $M\leq\dfrac{8\pi^{2}(n-1)}{3}$, for $n=2,3,\ldots$, for some specific
values of $c\in(0,1)$.

\begin{example}
For $a=-\pi^{2}$, $b=\pi^{2}$, and $c=1/2$, we have the following
approximations for the first periodic eigenvalues $\lambda_{0}$,
$\lambda_{1,1}$, $\lambda_{1,2}$, $\lambda_{2,1}$, $\lambda_{2,2}$\ and
antiperiodic eigenvalues $\mu_{1,1}$, $\mu_{1,2}$, $\mu_{2,1}$, $\mu_{2,2}$:
\begin{align*}
\lambda_{0}  &  =-0.100720167503\pi^{2},\\
\lambda_{1,1}  &  =3.953707280198\pi^{2},\qquad\lambda_{1,2}=3.976894161836\pi
^{2},\\
\lambda_{2,1}  &  =15.974913551204\pi^{2},\qquad\lambda_{2,2}%
=15.983422370241\pi^{2},\\
\mu_{1,1}  &  =0.317539742073\pi^{2},\qquad\mu_{1,2}=1.578063115969\pi^{2},\\
\mu_{2,1}  &  =8.768711027230\pi^{2},\qquad\mu_{2,2}=9.180457181326\pi^{2}.
\end{align*}
In our calculations, we take $r=s=5$. Usually it takes $8-10$ iterations with
the tolerance $1e-18$\ by the fixed point iteration method, even if we choose
an initial value that is not too close to the exact value, which means that
convergence is quite fast.

Moreover we have the following estimations for the first $4$ gaps:
\begin{align*}
\left\vert \Delta_{1}\right\vert  &  :=\mu_{1,2}-\mu_{1,1}=1.260523373896\pi
^{2}=12.440867038680\\
\left\vert \Delta_{2}\right\vert  &  :=\lambda_{1,2}-\lambda_{1,1}%
=0,023186881638\pi^{2}=0.228845349062\\
\left\vert \Delta_{3}\right\vert  &  :=\mu_{2,2}-\mu_{2,1}=0.411746154096\pi
^{2}=4.063771654597\\
\left\vert \Delta_{4}\right\vert  &  :=\lambda_{2,2}-\lambda_{2,1}%
=0.008508819037\pi^{2}=0.083978677816.
\end{align*}

\end{example}

\end{document}